\newcommand{\mR}{\mathbb{R}}
\newcommand{\mcH}{\mathcal{H}}
\newcommand{\mcU}{\mathcal{U}}
\newcommand{\mcP}{\mathcal{P}}
\newcommand{\mN}{\mathbb{N}}
\newcommand{\mZ}{\mathbb{Z}}
\newcommand{\mE}{\mathbb{E}}
\newcommand{\mfg}{\mathfrak{g}}
\DeclareMathOperator{\Ker}{Ker}
\DeclareMathOperator{\Img}{Im}
\DeclareMathOperator{\osp}{\mathfrak{osp}}
\DeclareMathOperator{\gl}{\mathfrak{gl}}
\DeclareMathOperator{\msl}{\mathfrak{sl}}
\DeclareMathOperator{\so}{\mathfrak{so}}
\DeclareMathOperator{\spp}{\mathfrak{sp}}
\theoremstyle{plain}
\newtheorem{thm}{Theorem}
\newtheorem*{thmA}{Theorem A}
\newtheorem*{thmB}{Theorem B}
\newtheorem{cor}{Corollary}
\newtheorem{lemma}{Lemma}
\newtheorem{prop}{Proposition}
\theoremstyle{definition}
\newtheorem{remark}{Remark}
\newcommand{\pa}{\partial}
\title{Fischer decomposition for polynomials on superspace}
\author{Roman L\'{a}vi\v{c}ka$^\dagger$\footnote{email: {\tt lavicka@karlin.mff.cuni.cz}}
\and Dalibor \v{S}m\'{\i}d$^\dagger$}
\date{\small{
$^\dagger$ Charles University in Prague, Faculty of Mathematics and Physics,\\
Sokolovsk\'a 83, 186 75 Praha, Czech Republic}}
\begin{document}

\maketitle

{\small
\abstract{Recently, the Fischer decomposition for polynomials on superspace $\mR^{m|2n}$ (that is, polynomials in $m$ commuting and $2n$ anti-commuting variables) has been obtained unless the superdimension $M=m-2n$ is even and non-positive.  In this case, it turns out that the Fischer decomposition of polynomials into spherical harmonics is quite analogous as in $\mR^m$ and it is an irreducible decomposition under the natural action of Lie superalgebra $\osp(m|2n)$. In this paper, we describe explicitly the Fischer decomposition in the exceptional case when $M\in -2\mN_0$. In particular, we show that, under the action of $\osp(m|2n)$, the Fischer decomposition is not, in general, a decomposition into irreducible but just indecomposable pieces.}

\medskip\noindent 
{\bf AMS Classification:} 17B10, 30G35, 58C50.\\
\noindent 
{\bf Keywords:} Fischer decomposition, superspace, indecomposable representations, orthosymplectic superalgebra, supersymmetric tensor product
}

\section{Introduction}

Recently, harmonic analysis has been developed on superspace $\mR^{m|2n}$ generated by  $m$ commuting (bosonic) and $2n$ anti-commuting (fermionic) variables, see \cite{Cou, Cou_JPAA, CDS, CDS_London, DES, DS3, Luo,LX, Zhang}. The Laplace operator $\Delta$, the square of norm $R^2$ and the Euler operator $\mE$ were generalized from the Euclidean space $\mR^m$ to the superspace $\mR^{m|2n}$ so that they still generate the Lie algebra $\msl(2)$. In particular, we have the commutation relation $[\Delta, R^2]= 4 \mE + 2 M$ 
where $M=m-2n$ is the so-called superdimension of $\mR^{m|2n}$. It was shown that $\Delta$, $R^2$ and $\mE$ are all invariant operators on the space $\mcP$ of scalar valued polynomials on $\mR^{m|2n}$ with respect to the natural action of the Lie superalgebra $\osp(m|2n)$. See the next section for more details.

Unless $M\in-2\mN_0$, the Fischer decomposition of $k$-homogeneous polynomials $\mcP_k$ into spherical harmonics (that is, separation of variables) has the usual form
\begin{equation}\label{I_Fischer0}
\mcP_k = \mcH_k \oplus R^2 \mcP_{k-2}\text{\ \ and\ \ }\mcP_{k}=\bigoplus_{j=0}^{\lfloor k/2\rfloor} R^{2j}\mcH_{k-2j}
\end{equation}
(see \cite[Theorem 3]{DS3} and \cite{Luo}).
Here $\mcH_k=\Ker(\Delta)\cap \mcP_k$ stands for the space of $k$-homogeneous spherical harmonics. Furthermore, the Fischer decomposition is an irreducible decomposition of $\mcP_k$ under the action of $\osp(m|2n)$.
Even, it turns out that $\osp(m|2n)$ and $\msl(2)$ form the Howe dual pair for polynomials on $\mR^{m|2n}$ unless $M\in-2\mN_0$, see \cite{Cou}. 
For an account of the Howe duality, we refer to  \cite{CW, Howe1, Howe2, LS, Serg1, Serg2}.

The exceptional case when $M\in-2\mN_0$ is more interesting. In this case, it is easy to observe that the Fischer decomposition can not have the form \eqref{I_Fischer0}. Indeed, it follows from the relation $[\Delta, R^2]= 4 \mE + 2 M$ that
$$\mcH^0_k=R^{2k+M-2}\mcH_{2-M-k}\subset\mcH_k$$
whenever $k\in I_M=\{k\in\mN_0|\ 2-M/2\leq k\leq 2-M\}$. It was proved that the $\osp(m|2n)$-module $\mcH_k$ is always irreducible except for the exceptional indices $k\in I_M$ when $\mcH_k$ is still indecomposable and the module $\mcH_k^0$ is irreducible and is the maximal $\osp(m|2n)$-submodule of $\mcH_k$ (see \cite{Cou} or Theorem B below).

In this paper, we describe explicitly the Fischer decomposition including the exceptional case $M\in -2\mN_0$. Indeed,  
by Theorem \ref{Fischer}, we have in general that
\begin{equation}\label{I_Fischer}
\mcP_k = \tilde\mcH_k \oplus R^2\Delta R^2 \mcP_{k-2}\text{\ \ and\ \ }\mcP_{k}=\bigoplus_{j=0}^{\lfloor k/2\rfloor} (R^2\Delta R^2)^{j}\tilde\mcH_{k-2j}
\end{equation}
where  $\tilde\mcH_k=\Ker(\Delta R^2\Delta)\cap \mcP_k$. Moreover, in Theorem \ref{tildeH}, we describe the structure of $\osp(m|2n)$-module $\tilde\mcH_k$. In particular, we show that $\tilde\mcH_k=\mcH_k$ except for the exceptional indices $k\in I_M$ when $\tilde\mcH_k$ is only indecomposable and the module $\mcH_k$ is the maximal $\osp(m|2n)$-submodule of $\tilde\mcH_k$.

In \cite{CD}, spinor valued polynomials on superspace $\mR^{m|2n}$ are studied. The super Dirac operator is introduced and the Fischer decomposition is described again only when $M\not\in-2\mN_0$. In a~forthcoming paper, we are going to investigate the exceptional case 
$M\in-2\mN_0$. The further difficulty lies in the fact that super spinor spaces are infinite dimensional. 

The paper is organized as follows. In Section 2, we introduce some notation and recall  known results from harmonic analysis on superspace.
In Section 3, we state our main results, namely, Theorems \ref{Fischer} and \ref{tildeH}. In Section 4, we give their proofs.

\section{Harmonic analysis on superspace}


In this section, we recall  known results from harmonic analysis on superspace. 

For an account of Lie superalgebras, we refer to \cite{CW}.
Let $V=V_0\oplus V_1$ be a~finite dimensional superspace (that is, $\mZ_2$-graded vector space) and let $\gl(V)$ stand for the Lie superalgebra of endomorphisms of $V$
endowed with the supercommutator 
$$[A,B]=AB-(-1)^{|A||B|}BA$$ 
where $|A|$ is the parity of a~homogeneous operator $A$. Moreover, assume that we have a~scalar superproduct $g$ on $V$, that is, 
$g$ is a non-degenerate bilinear form on $V$ such that $g|_{V_0}$ is symmetric, $g|_{V_1}$ is skew symmetric
and,  for each $v_0\in V_0$ and $v_1\in V_1$, $g(v_0,v_1)=0=g(v_1,v_0)$.  
Then the Lie superalgebra $\osp(V,g)$ consists of all operators $A\in\gl(V)$ which preserve $g$, that is, for $x,y\in V$,
$$g(Ax,y)+(-1)^{|A||x|}g(x,Ay)=0.$$ 
In this paper, we deal with the supersymmetric tensor product 
$$S(V)=\bigoplus_{k=0}^{\infty} S(V)_k\text{\ \ with\ } S(V)_k=\odot^k V$$
of the basic representation $V$ for $\mfg=\osp(V,g)$. Recall that the action of $\mfg$ on the tensor product $U\otimes W$ of $\mfg$-representations $U,W$ is defined by
$$A(u\otimes w)=(Au)\otimes w+(-1)^{|A||u|}u\otimes (Aw)$$
for $A\in\mfg$  and $u\in U$, $w\in W$. The symmetric tensor product $u\odot w$ is given by
$$u\odot w=(u\otimes w+(-1)^{|u||w|}w\otimes u)/2$$
for homogeneous $u\in U$, $w\in W$. 

To be more explicit, let the superspace $V$ be $\mR^{m|2n}$, that is, $V_0=\mR^{m}$
and $V_1=\mR^{2n}$. With respect to the standard basis, we denote the coordinates of $x\in\mR^{m|2n}$ as
$$x=(X_1,\ldots,X_{m+2n})=(x_1,\ldots,x_m,\theta_1,\ldots,\theta_{2n})$$
and we assume that the matrix of the given metric $g$ is the block diagonal matrix
$$g=(g^{ij})=
\begin{pmatrix}
E_m&\ \\
\ &J_{2n}\\
\end{pmatrix}
$$
where $E_m$ is the identity matrix of size $m$ and $J_{2n}$ is the square matrix of size $2n$ given by
$$
J_{2n}=\frac{1}{2}
\begin{pmatrix}
0&-1&&&& \\
1&0&&&\\
&&&\ddots&&\\
&&&&0&-1\\
&&&&1&0
\end{pmatrix}
$$
We write $\osp(m|2n)$ for $\osp(V,g)$.
Then the supersymmetric tensor product $S(V)$ can be realized as the space of polynomials on $\mR^{m|2n}$
$$\mcP=\mR[x_1,\ldots,x_m]\otimes\Lambda_{2n}.$$
Here $\mR[x_1,\ldots,x_m]$ are polynomials in $m$ commuting (bosonic) variables $x_1,\ldots,x_m$ and $\Lambda_{2n}$ is the Grassmann algebra generated by $2n$ anticommuting (fermionic) variables $\theta_1,\ldots,\theta_{2n}$. Moreover, the partial derivative $\pa_{X_j}$ and (the left multiplication by) $X_j$ are both homogeneous operators on $\mcP$ with the parity $|\pa_{X_j}|=|X_j|=[j]$ where $[j]=0$ for $j=1,\ldots,m$ and $[j]=1$ for $j=m+1,\ldots,m+2n$.
We have that 
$$[\pa_{X_i},X_j]=\delta_{ij}.$$
Using the metric $g=(g^{ij})$ one can raise indices, in particular,
$${X^j}=\sum_{i=1}^{m+2n}{X_i}\;g^{ij}.$$
Then, obviously, we have that
$$(X^1,\ldots,X^{m+2n})=(x_1,\ldots,x_m,\theta_2/2,-\theta_1/2,\ldots,\theta_{2n}/2,-\theta_{2n-1}/2),$$
$$(\pa_{X^1},\ldots,\pa_{X^{m+2n}})=(\pa_{x_1},\ldots,\pa_{x_m},2\pa_{\theta_2},-2\pa_{\theta_1},\ldots,2\pa_{\theta_{2n}},-2\pa_{\theta_{2n-1}}).$$
The action of $\osp(m|2n)$ on $\mcP$ is generated by
$$L_{ij}=X_i\pa_{X^j}-(-1)^{[i][j]}X_j\pa_{X^i}$$
for $1\leq i\leq j\leq m+2n$ (see \cite{CDS_invariant,Zhang}). 

As in the classical case, we have the Lie algebra $\msl(2)$ formed by $\osp(m|2n)$-invariant operators on $\mcP$, namely, the super Laplace operator, the super Euler operator and the square of super norm:
$$\Delta=\sum_{j=1}^{m+2n}\pa_{X^j}\pa_{X_j},\ \ \mE=\sum_{j=1}^{m+2n}X_j\pa_{X_j},\ \ R^2=\sum_{j=1}^{m+2n}{X^j}{X_j}.$$
Indeed, we have 
$[\Delta/2, R^2/2]=\mE+M/2$ and $$[\Delta/2, \mE+M/2]=2\Delta/2,\ \ \  [R^2/2, \mE+M/2]=-2R^2/2$$
where $M=m-2n$ is the so-called superdimension of $\mR^{m|2n}$. 
Moreover,
each of these operators has the bosonic and the fermionic part
$$\Delta=\Delta_b+\Delta_f\text{\ \ with\ \ }
\Delta_b=\sum_{j=1}^{m} \pa_{x_j}^2,\ \Delta_f=-4\sum_{j=1}^{n}\pa_{\theta_{2j-1}}\pa_{\theta_{2j}};$$
$$\mE=\mE_b+\mE_f\text{\ \ with\ \ }
\mE_b=\sum_{j=1}^{m} x_j\pa_{x_j},\ \mE_f=\sum_{j=1}^{2n}\theta_j\pa_{\theta_j};$$
$$R^2=r^2+\theta^2\text{\ \ with\ \ }
r^2=\sum_{j=1}^{m} {x^2_j},\ \theta^2=-\sum_{j=1}^{n}{\theta_{2j-1}}{\theta_{2j}}.$$
Let us remark that, by \cite{JG}, the Casimir operator of degree 2 for $\osp(m|2n)$ is given by
\begin{equation}\label{Casimir}
C=R^2\Delta-\mE(M-2+\mE).
\end{equation}
The space of $k$-homogeneous polynomials is given by
$$\mcP_k=\{P\in\mcP|\ \mE P=kP\}$$ and, for an operator $A$ on $\mcP$, we define
$$\Ker A=\{H\in\mcP|\ A H=0\}\text{\ \ and\ \ }\Ker_k A=\Ker(A)\cap\mcP_k.$$ 
The important role is played by the space of $k$-homogeneous spherical harmonics
$$\mcH_k=\Ker_k \Delta.$$ In the purely bosonic and fermionic case when $n=0$ and $m=0$, we write $\mcH_k^b$ and $\mcH_k^f$ for $\mcH_k$, respectively.

In \cite[Theorem 3]{DS3} and \cite{Luo}, the Fischer decomposition of polynomials on $\mR^{m|2n}$ into spherical harmonics is obtained unless  
$M\in-2\mN_0$. Indeed, we have

\begin{thmA} Let $M\not\in-2\mN_0$. Then, for each $k\in\mN_0$, we have that
$$\mcP_k = \mcH_k \oplus R^2 \mcP_{k-2}.$$
\end{thmA}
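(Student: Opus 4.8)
The plan is to prove the decomposition by induction on $k$, establishing in one stroke that the sum $\mcH_k+R^2\mcP_{k-2}$ is direct and that it exhausts $\mcP_k$. The cases $k=0,1$ are immediate, since then $\mcP_k=\mcH_k$ and $\mcP_{k-2}=0$; so assume the statement for all degrees $\le k-2$, together with its iterated consequence $\mcP_{l}=\bigoplus_{j\ge 0}R^{2j}\mcH_{l-2j}$ for $l\le k-2$.

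The only computational ingredient is the action of $\Delta$ on the blocks $R^{2j}\mcH_l$. Iterating $[\Delta,R^2]=4\mE+2M$ gives, for $P\in\mcP_l$,
$$\Delta\,R^{2j}P=R^{2j}\,\Delta P+2j\,(2l+M+2j-2)\,R^{2(j-1)}P ,$$
so for $H\in\mcH_{l}$ with $l=k-2-2j$ we obtain $\Delta\,R^{2(j+1)}H=c_j\,R^{2j}H$ with $c_j:=2(j+1)(2k+M-4-2j)$. The point at which the hypothesis enters is that $c_j\neq 0$ whenever $M\notin-2\mN_0$: indeed $c_j$ vanishes only for $M=-2(k-j-2)$, and the constraint $l=k-2-2j\ge 0$ (which holds for every $j$ occurring in an expansion of an element of $\mcP_{k-2}$) forces $k-j-2\ge 0$, i.e. $M\in-2\mN_0$.

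With this in hand, a single computation settles both halves. For $Q\in\mcP_{k-2}$ expand $Q=\sum_j R^{2j}H_j$, $H_j\in\mcH_{k-2-2j}$, via the inductive Fischer decomposition of $\mcP_{k-2}$; then $\Delta R^2Q=\sum_j c_j\,R^{2j}H_j$. If $\Delta R^2Q=0$, the directness of the decomposition of $\mcP_{k-2}$ forces each $c_jR^{2j}H_j=0$, and since $c_j\neq 0$ and multiplication by $R^2$ is injective on $\mcP$ (true whenever $m\ge1$, which holds automatically here because $m=0$ would give $M=-2n\in-2\mN_0$) we get $H_j=0$ for all $j$, hence $Q=0$. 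Thus $\Delta R^2$ is injective, hence bijective, on the finite-dimensional space $\mcP_{k-2}$. Bijectivity yields both claims at once: if $P=R^2Q\in\mcH_k$ then $\Delta R^2Q=\Delta P=0$, so $Q=0$ and $P=0$, giving $\mcH_k\cap R^2\mcP_{k-2}=0$; and for arbitrary $P\in\mcP_k$ there is $Q\in\mcP_{k-2}$ with $\Delta R^2Q=\Delta P$, whence $P-R^2Q\in\mcH_k$ and $P\in\mcH_k\oplus R^2\mcP_{k-2}$. This closes the induction.

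The one substantive point is the nonvanishing of the scalars $c_j$, which is exactly what forces $M\in-2\mN_0$ to be excluded; everything else is formal $\msl(2)$-bookkeeping. (The auxiliary fact that multiplication by $R^2=r^2+\theta^2$ is injective on $\mcP$ for $m\ge1$ follows by filtering $\mcP$ by fermionic degree: on the associated graded $R^2$ acts as $r^2$, which is injective on $\mR[x_1,\dots,x_m]$.) One could alternatively carry out the argument through the Fischer bilinear pairing on $\mcP$, under which $R^2$ and $\Delta$ are mutually adjoint up to a constant, but the commutator computation above is more self-contained.
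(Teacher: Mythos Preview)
Your proof is correct and follows essentially the same route as the paper's argument (given there as Lemma~\ref{Fischer0}): induction on $k$, the commutator identity $[\Delta,R^{2j+2}]=C(j,l)R^{2j}$ on $\mcP_l$ to see that $\Delta R^2$ acts as a nonzero scalar on each block $R^{2j}\mcH_{k-2-2j}$ of the inductive decomposition of $\mcP_{k-2}$, and hence is bijective on $\mcP_{k-2}$, from which the Fischer splitting follows. The only cosmetic difference is that the paper packages the final implication ``$\Delta R^2$ bijective on $\mcP_{k-2}$ $\Rightarrow$ $\mcP_k=\Ker_k\Delta\oplus R^2\mcP_{k-2}$'' as an abstract lemma (Lemma~\ref{LKLK}(ii)), whereas you write out the two lines directly; also, your appeal to injectivity of $R^2$ is harmless but unnecessary, since $R^{2j}H_j=0$ for all $j$ already gives $Q=0$.
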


\begin{remark}
By induction, it follows from Theorem A that, for each $k\in\mN_0$,
$$\mcP_{k}=\bigoplus_{j=0}^{\lfloor k/2\rfloor} R^{2j}\mcH_{k-2j}.$$ 
Hence the Fischer decomposition looks like the classical (purely bosonic) one. In Figure \ref{Fig_I}, all the summands of $\mcP_k$ are contained in the $k$-th column. Each row yields an infinite dimensional representation of $\msl(2)$ generated by the operators $\Delta$  and $R^2$.

Let us remark that, 
in the case  $m=1$, we have  $\mcH_k\not=0$  if and only if $k=0,\ldots,2n+1$ (see \cite{Cou}). So in this case,  for all $k>2n+1$, the $k$-th rows in Figure \ref{Fig_I} are missing. 
\end{remark}

\begin{figure}[h]

\caption{The Fischer decomposition for $M\not\in -2\mN_0$.}\label{Fig_I}

$$\xymatrix@!C=5pt@R=1pt{
\mcP_0 & \mcP_1 & \mcP_2 & \mcP_3 & \mcP_4 & \mcP_5 & \cdots \\
\\
\mcH_0 & & \ar[ll]_{\Delta}  \ar[rr]^{R^2} R^2\mcH_0 & & R^4\mcH_0 & & \cdots \\
& \mcH_1 & &  R^2\mcH_1 & & R^4\mcH_1  \\
& & \mcH_2 & & R^2\mcH_2 & & \cdots   \\
& & & \mcH_3 & &  R^2\mcH_3  \\
& & & & \mcH_4 & & \cdots \\
& & & & & \mcH_5 & & \\
& & & & & & \ddots & & \\
}$$

\end{figure}

The case when $m=0$ is well-known, see e.g.\ \cite[\S\;4.2]{DES}. This is the purely fermionic case and $M=-2n$.
For example, the Fischer decomposition is depicted in Figure \ref{Fig_II}. In this case, the diagram has $n+1$ rows and each row forms a~finite dimensional $\msl(2)$-representation.

\begin{figure}[h]

\caption{The Fischer decomposition for $M\in -2\mN_0$ and $m=0$.}\label{Fig_II}

$$\xymatrix@!C=5pt@R=1pt{
\mcP_0 & \mcP_1 & \mcP_2 & \mcP_3 & \cdots & \cdots & \mcP_{2n-2} \ \ \ & \mcP_{2n-1} & \mcP_{2n} \\
\\
\mcH_0 & & \ar[ll]_{\Delta}  R^2\mcH_0 & & \cdots & & R^{2(n-1)}\mcH_0 & & R^{2n}\mcH_0 \\
&  \ar[rr]^{R^2}  \mcH_1 & & R^2\mcH_1 & & \cdots & & R^{2(n-1)}\mcH_1 \\
& & \mcH_2 & & \cdots & & R^{2(n-2)}\mcH_2 \\
& & & \ddots & & \iddots \\
& & & & {\mcH}_n\\
}$$

\end{figure}

In what follows, we assume that $m\not=0$ unless otherwise stated. 
The structure of $\mcH_k$ is known when it is viewed as a~module under the action of the whole $\osp(m|2n)$ or only its even part $\so(m)\times\spp(2n)$.
For the following result, we refer to  \cite[Theorem 4]{DES}. 

\begin{prop}\label{decomp0} 
Under the action of $\so(m)\times\spp(2n)$, the module $\mcH_k$ has a~multiplicity free irreducible decomposition
$$\mcH_k\simeq\bigoplus_{j=0}^{\min(n,k)}\bigoplus_{\ell=0}^{\min(n-j,\lfloor\frac{k-j}{2}\rfloor )} \mcH^b_{k-2\ell-j}\otimes \mcH^f_j.$$
\end{prop}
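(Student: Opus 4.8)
The plan is to derive this from the two classical ``pure-sector'' Fischer decompositions together with an explicit kernel computation in a small $\msl(2)$-module. First write $\mcP=\mcP^b\otimes\mcP^f$ with $\mcP^b=\mR[x_1,\ldots,x_m]$, $\mcP^f=\Lambda_{2n}$, and let $\mcP^b_a$, $\mcP^f_b$ be their homogeneous components. The even subalgebra $\so(m)\times\spp(2n)$ acts factorwise: the generators $L_{ij}$ with $i,j\le m$ span $\so(m)$ and touch only $\mcP^b$, those with $i,j>m$ span $\spp(2n)$ and touch only $\mcP^f$, so it preserves the bigrading. I would then invoke separation of variables in each sector: the $\so(m)$-harmonic decomposition $\mcP^b_a=\bigoplus_{\ell}r^{2\ell}\mcH^b_{a-2\ell}$ with each $\mcH^b_p$ irreducible, and the symplectic (Lefschetz) decomposition $\mcP^f_b=\bigoplus_{s}\theta^{2s}\mcH^f_{b-2s}$, where $\mcH^f_j$ is the irreducible module of primitive $j$-forms, nonzero exactly for $0\le j\le n$ and with $\theta^{2s}$ injective on it exactly for $0\le s\le n-j$; the latter, although $M=-2n\in-2\mN_0$, still has this ``pure'' shape (this is the case $m=0$ discussed in the references). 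Multiplying these out, for $p\ge0$ and $0\le j\le n$ the $\so(m)\times\spp(2n)$-isotypic component of $\mcP$ of type $\mcH^b_p\otimes\mcH^f_j$ equals $W_{p,j}\otimes\mcH^b_p\otimes\mcH^f_j$ with multiplicity space $W_{p,j}=\bigoplus_{\ell\ge0}\bigoplus_{s=0}^{n-j}\mR\,r^{2\ell}\theta^{2s}$.

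Since $\Delta=\Delta_b+\Delta_f$ commutes with $\so(m)\times\spp(2n)$, the space $\mcH_k=\Ker_k\Delta$ is again a sum over $(p,j)$ of copies of $\mcH^b_p\otimes\mcH^f_j$, and the only thing left is to compute $\dim\big(\Ker\Delta\cap W_{p,j}\big)$ in each total degree. On $W_{p,j}$ the triples $r^2,\Delta_b,\mE_b$ and $\theta^2,\Delta_f,\mE_f$ generate two commuting copies of $\msl(2)$, and from $[\Delta_b,r^2]=4\mE_b+2m$ and $[\Delta_f,\theta^2]=4\mE_f-4n$ one obtains, for $H\in\mcH^b_p$, $G\in\mcH^f_j$,
$$\Delta_b(r^{2\ell}H)=a_\ell\,r^{2\ell-2}H,\quad a_\ell=2\ell(2p+2\ell+m-2);\qquad \Delta_f(\theta^{2s}G)=c_s\,\theta^{2s-2}G,\quad c_s=4s(j+s-1-n).$$
Here $a_\ell\neq0$ for every $\ell\ge1$ (this uses $m\ge1$, $p\ge0$), while $c_s\neq0$ for $1\le s\le n-j$ and $c_{n-j+1}=0$; so $W_{p,j}$ is, over the two $\msl(2)$'s, the outer tensor product of the irreducible lowest-weight module of lowest weight $p+m/2$ with the $(n-j+1)$-dimensional irreducible.

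The heart of the argument is then the kernel of the diagonal lowering operator $\Delta=\Delta_b+\Delta_f$ on this module. In total degree $k$ the relevant space has basis $v_s=r^{2(t-s)}\theta^{2s}$ (times a fixed $H\otimes G$), where $t=(k-p-j)/2$ --- so it vanishes unless $k-p-j\in2\mN_0$ --- and $0\le s\le\min(t,n-j)$, and a direct computation gives
$$\Delta\Big(\sum_s\alpha_s v_s\Big)=\sum_s\big(a_{t-s}\alpha_s+c_{s+1}\alpha_{s+1}\big)\,r^{2(t-1-s)}\theta^{2s}.$$
Because each $a_{t-s}$ with $t-s\ge1$ and each $c_s$ with $1\le s\le n-j$ is nonzero, this bidiagonal system has a one-dimensional solution space when $t\le n-j$ (solve for $\alpha_0,\ldots,\alpha_{t-1}$ in terms of $\alpha_t$) and only the trivial solution when $t>n-j$ (the equation indexed by $s=n-j$ forces $\alpha_{n-j}=0$, then all $\alpha_s=0$). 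Hence $\mcH^b_p\otimes\mcH^f_j$ occurs in $\mcH_k$ with multiplicity $1$ exactly when $p=k-2\ell-j$ for some integer $\ell$ with $0\le\ell\le n-j$ --- and then $\ell\le\lfloor(k-j)/2\rfloor$ and $0\le j\le\min(n,k)$ follow from $p\ge0$ --- and with multiplicity $0$ otherwise. Re-indexing, this is exactly the stated sum; it is multiplicity free since the type $\mcH^b_{k-2\ell-j}\otimes\mcH^f_j$ determines $j$ (from the $\spp(2n)$-factor) and hence $\ell$ (from the $\so(m)$-factor).

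I expect the main obstacle to be precisely the bidiagonal kernel count: it is elementary but must be done carefully, and its crucial input is the non-vanishing of the $a_\ell$, which is where the hypothesis $m\neq0$ is used (for $m\in\{1,2\}$ one reads ``irreducible'' over $\mC$, or simply observes that the displayed isomorphism holds verbatim as $\so(m)\times\spp(2n)$-modules). Assembling the two classical sector decompositions, and in particular the symplectic one, is otherwise standard.
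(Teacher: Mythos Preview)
Your argument is correct. Note, however, that the paper does not give its own proof of this proposition: it simply attributes the result to \cite[Theorem~4]{DES}. So there is no ``paper's proof'' to compare against beyond that citation.

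Your route---separating the bosonic and fermionic Fischer decompositions, identifying the $\so(m)\times\spp(2n)$-isotypic component of type $\mcH^b_p\otimes\mcH^f_j$ with the multiplicity space $W_{p,j}=\bigoplus_{\ell\ge0}\bigoplus_{s=0}^{n-j}\mR\,r^{2\ell}\theta^{2s}$, and then solving the bidiagonal system for $\Ker\Delta$---is essentially the natural one, and is in the same spirit as the treatment in the cited reference. The $\msl(2)\times\msl(2)$ interpretation (Verma-type lowest weight module tensored with the $(n{-}j{+}1)$-dimensional irreducible, kernel of the diagonal lowering operator computed by Clebsch--Gordan) is a clean way to organise the computation and makes transparent why the multiplicity is $1$ for $t\le n-j$ and $0$ otherwise.

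Two small remarks. First, your key nonvanishing $a_\ell\ne0$ for $\ell\ge1$ indeed uses $m\ge1$, which is exactly the standing hypothesis in this part of the paper; this is where the case $m=0$ (purely fermionic) genuinely differs. Second, your final multiplicity-freeness claim relies on the $\mcH^b_p$ being pairwise non-isomorphic $\so(m)$-modules; for $m\ge3$ this is immediate from the highest weights, for $m=2$ it holds over $\mR$, and for $m=1$ one has $\mcH^b_p=0$ for $p\ge2$ while the parity of $p=k-2\ell-j$ is fixed by $k-j$, so no collision can occur. You already flagged these edge cases, so the argument stands as written.
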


Now we recall that $\osp(m|2n)$-module $\mcH_k$ is irreducible except for some 'exceptional' indices $k$.
For a~given superdimension $M$, we define the set of exceptional indices by $I_M=\emptyset$ if $M\not\in-2\mN_0$, and
\begin{equation}\label{IM}
I_M=\{k\in\mN_0|\ 2-M/2\leq k\leq 2-M\}\text{\ if\ } M\in-2\mN_0.
\end{equation}

Denote by $L^{m|2n}_{\lambda}$ an $\osp(m|2n)$-irreducible module with the highest weight $\lambda$. We use the simple root system of $\osp(m|2n)$ as in \cite{Zhang, Cou}. This is not the standard choice \cite{Kac} but is more convenient for our purposes.

\begin{thmB}{\rm (\cite{Cou,Luo})} Let the set $I_M$ be defined as in \eqref{IM} and denote $\mcH^0_k=R^2\Ker_{k-2} (\Delta R^2)$.

\smallskip\noindent
(i) Let $k\not\in I_M$. Then $\mcH_k\simeq L^{m|2n}_{(k,0\ldots,0)}$  and $\mcH^0_k=0$. 

\smallskip\noindent
(ii) Let $k\in I_M$. Then $\mcH^0_k=R^{2k+M-2}\mcH_{2-M-k}$ and $\mcH^0_k$
is a~proper subset of $\mcH_k$.
Moreover, under the action of $\osp(m|2n)$, the module
$\mcH_k$ is indecomposable and its composition series is
$$\mcH^0_k\simeq L^{m|2n}_{(2-M-k,0\ldots,0)},\ \ \mcH_k/\mcH^0_k\simeq L^{m|2n}_{(k,0\ldots,0)}.$$ 

\end{thmB}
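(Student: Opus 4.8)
The plan is to analyse $\mcH_k$ as a highest weight module for $\mfg=\osp(m|2n)$. First I would pin down an explicit highest weight vector: the purely bosonic polynomial $z^k$ with $z=x_1+\mathrm{i}x_2$ lies in $\mcH_k$ (it is killed by $\Delta$, depending only on $x_1,x_2$) and, with respect to the simple root system of \cite{Zhang,Cou}, is a highest weight vector of weight $(k,0,\ldots,0)$ — it has the maximal $\so(m)$-highest weight in degree $k$ and extremal (zero) fermionic weight, which in that root system is exactly what makes it annihilated by all positive root vectors, even and odd. As each $\mcP_k$ is finite dimensional, the $\mfg$-submodule $\mcM_k$ generated by $z^k$ is a finite dimensional highest weight module; hence $\mcM_k$ is indecomposable, has a unique maximal submodule, and $\mcM_k$ modulo that submodule is $L^{m|2n}_{(k,0,\ldots,0)}$.

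The central point is that $\mcM_k=\mcH_k$, i.e.\ that $\mcH_k$ is generated by $z^k$. For this I would use Proposition \ref{decomp0}: since $\mcH_k$ restricted to $\so(m)\times\spp(2n)$ is multiplicity free, every $\mfg$-submodule is the direct sum of a subset of the constituents $\mcH^b_{k-2\ell-j}\otimes\mcH^f_j$, so it suffices to show $\mcM_k$ meets each of them. As $z^k$ lies in the corner constituent $(\ell,j)=(0,0)$, this becomes a connectedness statement: the odd root vectors $L_{i,m+a}$ change the fermionic degree by $\pm1$ and the even ones move within a fixed fermionic degree, and one must check that a suitable matrix coefficient of some odd root vector on the $\so(m)\times\spp(2n)$-highest weight vector of each constituent is nonzero. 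Establishing these non-vanishings — a Shapovalov-type computation — is the main obstacle; moreover exactly one of them fails when $k\in I_M$ (and none when $k\notin I_M$), which is the source of the dichotomy in the theorem.

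Granting $\mcM_k=\mcH_k$, part (i) follows by identifying the $\mfg$-singular vectors of $\mcH_k$. A singular vector is in particular a $\so(m)\times\spp(2n)$-highest weight vector, hence by multiplicity-freeness a scalar multiple of the highest weight vector of a single constituent, and it must in addition be killed by the positive odd root vectors. For $k\notin I_M$ the non-vanishings above leave $z^k$ as the only such vector; since any nonzero submodule of the finite dimensional $\mcH_k$ contains a highest weight vector of $\mcH_k$, every nonzero submodule is all of $\mcH_k$, so $\mcH_k$ is irreducible and $\mcH_k\simeq L^{m|2n}_{(k,0,\ldots,0)}$. Finally, $R^2P$ is harmonic precisely when $\Delta(R^2P)=0$, whence $\mcH^0_k=R^2\Ker_{k-2}(\Delta R^2)=\mcH_k\cap R^2\mcP_{k-2}$; this $\mfg$-submodule cannot contain $z^k$, which, being a polynomial in $x_1,x_2$ alone, is not divisible by $r^2$ and so is not in $R^2\mcP_{k-2}$; by irreducibility $\mcH^0_k=0$.

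For part (ii), let $k\in I_M$. Induction on $j$ via $[\Delta,R^2]=4\mE+2M$ gives, for $H\in\mcH_\ell$,
\[ \Delta(R^{2j}H)=2j\,(2\ell+2j-2+M)\,R^{2(j-1)}H . \]
With $\ell=2-M-k$ and $2j=2k+M-2$ (a nonnegative integer exactly because $k\in I_M$) the coefficient vanishes, so $R^{2k+M-2}\mcH_{2-M-k}\subseteq\mcH_k$. Expanding $R^{2k+M-2}z^{2-M-k}=(r^2+\theta^2)^{j}z^{2-M-k}$ by the binomial theorem, its fermionic-degree-zero part is $r^{2j}z^{2-M-k}\neq0$, so $R^{2k+M-2}$ does not kill $z^{2-M-k}$; since $R^2$ commutes with $\mfg$ and, as $2-M-k<k$ lies outside $I_M$, part (i) gives $\mcH_{2-M-k}\simeq L^{m|2n}_{(2-M-k,0,\ldots,0)}$ irreducible, we get that $R^{2k+M-2}$ is injective on $\mcH_{2-M-k}$ and that $R^{2k+M-2}\mcH_{2-M-k}\simeq L^{m|2n}_{(2-M-k,0,\ldots,0)}$ is a nonzero irreducible $\mfg$-submodule of $\mcH_k$, contained in $\mcH^0_k$ and proper (again $z^k\notin R^2\mcP_{k-2}$). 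Since $\mcH_k$ is cyclic it is indecomposable with top quotient $L^{m|2n}_{(k,0,\ldots,0)}$; to conclude that $\mcH^0_k=R^{2k+M-2}\mcH_{2-M-k}$ and that $\mcH_k$ has exactly these two composition factors, I would compare dimensions via Proposition \ref{decomp0} with the Kac--Wakimoto dimension formula for $L^{m|2n}_{(k,0,\ldots,0)}$ (equivalently, check that $\mcH_k$ admits no subsingular vectors), which forces $\dim\mcH_k=\dim L^{m|2n}_{(k,0,\ldots,0)}+\dim\mcH_{2-M-k}$, hence composition length two, hence $\mcH_k/\mcH^0_k\simeq L^{m|2n}_{(k,0,\ldots,0)}$.
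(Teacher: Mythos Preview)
The paper does not actually prove Theorem~B: it is quoted as a known result from \cite{Cou,Luo}, and only two fragments of it are re-derived inside the paper. First, the identity $\mcH^0_k=R^{2k+M-2}\mcH_{2-M-k}$ (and $\mcH^0_k=0$ for $k\notin I_M$) is obtained in the proof of Corollary~\ref{cor_Fischer}: from the inductive Fischer decomposition of $\mcP_{k-2}$ together with Lemma~\ref{commutator}, one reads off $\Ker_{k-2}(\Delta R^2)$ directly, since $\Delta R^2$ acts on each summand $R^{2j}\mcH_{k-2j-2}$ by the explicit scalar $C(j,k-2j-2)$. Second, the cyclicity statement $\mcH_k=\mcU(\mfg)(\mcH^b_k)$, which you correctly identify as the crux, is not proved here either; it is imported verbatim from \cite[Proof of Theorem~5.1]{Cou} in the proof of Theorem~\ref{tildeH}. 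The irreducibility of $\mcH_k$ for $k\notin I_M$, the indecomposability for $k\in I_M$, and the identification of the composition factors are all taken on faith from the cited references.

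Your plan is essentially the strategy of \cite{Cou}, so it is the right one, but as you yourself flag, the two substantive steps are only promised, not carried out: the ``Shapovalov-type'' non-vanishings that make the odd root vectors connect the $\so(m)\times\spp(2n)$-constituents, and the dimension/subsingular-vector check at the end. One concrete improvement: for the equality $\mcH^0_k=R^{2k+M-2}\mcH_{2-M-k}$ you do not need a dimension comparison with Kac--Wakimoto. The paper's argument is cleaner: once you have (by induction and Lemma~\ref{commutator}) the decomposition $\mcP_{k-2}=\bigoplus_j R^{2j}\mcH_{k-2j-2}$ for the relevant range, $\Delta R^2$ kills exactly the summand with $k-2j-2+j=-M/2$, i.e.\ $R^{2k+M-4}\mcH_{2-M-k}$, and is an isomorphism on the others; this gives the equality immediately and also shows $\mcH^0_k=0$ for $k\notin I_M$. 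That still leaves the composition-length-two claim depending on the cyclicity and singular-vector analysis, for which you would indeed have to go into \cite{Cou} or \cite{Luo}.
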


\section{The main results}

In this section, we state our main results which generalize Theorems A and B. Their proofs are given in the next section.

First we describe the Fischer decomposition including the exceptional case $M\in-2\mN_0$.

\begin{thm}\label{Fischer} For each $k\in\mN_0$, we have that
$$\mcP_k = \tilde{\mcH}_k \oplus R^2 \Delta R^2 \mcP_{k-2}$$
where $\tilde{\mcH}_k = \Ker_k (\Delta R^2 \Delta)$.
\end{thm}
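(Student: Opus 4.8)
The plan is to work with the Fischer inner product: the nondegenerate bilinear form $\langle\cdot\,,\cdot\rangle$ on $\mcP$ that restricts to a nondegenerate form on each $\mcP_k$ and for which $R^2$ and $\Delta$ are mutually adjoint up to a positive constant (this is the form used in the proof of Theorem~A in \cite{DS3,Luo}). With respect to it the operators $\Delta R^2\Delta\colon\mcP_k\to\mcP_{k-2}$ and $R^2\Delta R^2\colon\mcP_{k-2}\to\mcP_k$ are mutually adjoint, so $\tilde\mcH_k=\Ker_k(\Delta R^2\Delta)$ is exactly the orthogonal complement of $R^2\Delta R^2\mcP_{k-2}$ in $\mcP_k$. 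By nondegeneracy on $\mcP_k$ the dimensions already add up, so the asserted splitting is equivalent to $\tilde\mcH_k\cap R^2\Delta R^2\mcP_{k-2}=0$, i.e.\ to nondegeneracy of the Fischer form on $R^2\Delta R^2\mcP_{k-2}$. Writing $B$ for $\Delta R^2$ viewed as an operator $\mcP_{k-2}\to\mcP_{k-2}$ and using that $R^2$ is injective on $\mcP$ whenever $m\neq0$ (iterating $R^2P=0$ gives $(r^2)^{n+1}P=\pm(\theta^2)^{n+1}P=0$, and $(r^2)^{n+1}$ is not a zero divisor in $\mR[x_1,\dots,x_m]$), a short computation identifies that radical with $R^2B(\Ker B^{3})$. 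Hence the whole theorem reduces to the single statement that $B$ is semisimple at the eigenvalue $0$, i.e.\ $\Ker B=\Ker B^{2}$, equivalently $\Ker B\cap\Img B=0$.

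First I would check that this semisimplicity implies the theorem. Injectivity of $R^2$ forces its adjoint $\Delta\colon\mcP_k\to\mcP_{k-2}$ to be surjective; and if $\Ker B=\Ker B^2$ then $\Img B=\Img B^{3}$, so for $P\in\mcP_k$ one has $\Delta R^2\Delta P=B(\Delta P)=B^{3}Q$ for some $Q\in\mcP_{k-2}$, giving $P-R^2\Delta R^2Q\in\tilde\mcH_k$; conversely if $R^2\Delta R^2Q\in\tilde\mcH_k$ then $B^{3}Q=\Delta R^2\Delta(R^2\Delta R^2Q)=0$, hence $BQ=0$ and $R^2\Delta R^2Q=R^2(BQ)=0$. (Equivalently, by \eqref{Casimir} one has $\Delta R^2=C+k(k+M-2)$ on $\mcP_{k-2}$, so the claim is that the Casimir $C$ has no Jordan block of size $\geq2$ at the eigenvalue $-k(k+M-2)$ on $\mcP_{k-2}$.)

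The core of the proof is then the semisimplicity of $B$ at $0$, i.e.\ nondegeneracy of the Fischer form on $\Ker B=\Ker_{k-2}(\Delta R^2)$ (which, $B$ being self-adjoint, is the orthogonal complement of $\Img B$, so the intersection in question is the radical of the form restricted to $\Ker_{k-2}(\Delta R^2)$). If $k\notin I_M$, then $\mcH^0_k=0$ by Theorem~B(i) and $\Ker_{k-2}(\Delta R^2)=(R^2)^{-1}(\mcH^0_k)=0$, so there is nothing to prove (this recovers $\tilde\mcH_k=\mcH_k$, $\mcP_k=\mcH_k\oplus R^2\mcP_{k-2}$). If $k\in I_M$, then by Theorem~B(ii) and injectivity of $R^2$ (using $2k+M-4\geq0$, which holds since $k\geq2-M/2$) one gets $\Ker_{k-2}(\Delta R^2)=(R^2)^{-1}(\mcH^0_k)=R^{2k+M-4}\mcH_{2-M-k}$. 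Put $a=k-2+M/2\geq0$. Applying the standard identity $\Delta R^{2b}g=2b\,(2\deg g+2b-2+M)\,R^{2b-2}g$ for harmonic $g$ repeatedly and moving $R^2$'s across via the adjointness, one finds that $\langle R^{2a}g,R^{2a}h\rangle$ is a nonzero scalar multiple of $\langle g,h\rangle$ for $g,h\in\mcH_{2-M-k}$: the scalar is (up to the normalization constant) $\prod_{b=1}^{a}2b\,(2(2-M-k)+2b-2+M)=(-1)^a2^{2a}(a!)^2$, whose factors $2(2-M-k)+2b-2+M=2(b-a-1)$ are all nonzero for $1\leq b\leq a$. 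Thus the Fischer form on $\Ker_{k-2}(\Delta R^2)$ is a nonzero multiple of the one on $\mcH_{2-M-k}$, which is nondegenerate because its radical equals $\mcH^0_{2-M-k}$ and $2-M-k\notin I_M$ (indeed $k\in I_M$ forces $k>-M/2$, hence $2-M-k<2-M/2$). This completes the proof.

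The main obstacle is not any single estimate but getting the infrastructure in place: establishing the required properties of the superspace Fischer form — nondegeneracy on each $\mcP_k$, mutual adjointness of $R^2$ and $\Delta$, and the product formula for $\langle R^{2a}g,R^{2a}h\rangle$ on spaces of spherical harmonics — while keeping track of parities, and then the bookkeeping of exceptional degrees that isolates the one place where nondegeneracy could fail and verifies that it does not. The degenerate case $m=1$ (where $\mcH_\ell=0$ for $\ell>2n+1$) needs no separate treatment, since then every space occurring above vanishes.
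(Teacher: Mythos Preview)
Your argument is correct (granting the Fischer pairing infrastructure you flag at the end), but it takes a genuinely different route from the paper's.

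The paper proceeds by induction on $k$. It proves a purely algebraic splitting principle (Lemma~\ref{LKLK}): if $U=\Ker LK\oplus\Img LK$ then $V=\Ker LKL\oplus\Img KLK$. Applied with $U=\mcP_{k-2}$, $V=\mcP_k$, $L=\Delta$, $K=R^2$, this reduces Theorem~\ref{Fischer} to the hypothesis $\mcP_{k-2}=\Ker(\Delta R^2)\oplus\Img(\Delta R^2)$, which is then verified from the inductive decomposition $\mcP_{k-2}=\bigoplus_j(R^2\Delta R^2)^j\tilde\mcH_{k-2j-2}$ together with Lemma~\ref{isomorphism}: on each piece $R^{2j}\tilde\mcH_\ell$ the operator $\Delta R^2$ is either an isomorphism or identically zero, so every summand lies entirely in $\Ker(\Delta R^2)$ or in $\Img(\Delta R^2)$. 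No bilinear form appears, and Theorem~B is not used.

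You instead work non-inductively with the Fischer form: self-adjointness of $B=\Delta R^2$ on $\mcP_{k-2}$ makes the desired splitting equivalent to $\Ker B\cap\Img B=0$, i.e.\ to nondegeneracy of the form on $\Ker B$. You then identify $\Ker B$ explicitly via Theorem~B and compute the form on it. Both arguments boil down to the same semisimplicity statement $\Ker B=\Ker B^2$; the paper obtains it from the piecewise zero-or-isomorphism behaviour of $\Delta R^2$, while you obtain it from adjointness and the explicit description of $\mcH^0_k$. The trade-off: the paper's proof is self-contained (only the commutator identity is needed), whereas yours is more conceptual but imports Theorem~B and the superspace Fischer pairing, whose adjointness and nondegeneracy properties you correctly note must be set up carefully.
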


By induction, it follows from Theorem \ref{Fischer} that, for each $k\in\mN_0$,
\begin{equation}\label{eq_Fischer}
\mcP_{k}=\bigoplus_{j=0}^{\lfloor k/2\rfloor} (R^2 \Delta R^2)^j\tilde\mcH_{k-2j}.
\end{equation} 
Let us note that 
some direct summands might be trivial. Indeed, we have

\begin{cor}\label{cor_Fischer} Let $k\in\mN_0$ and let the exceptional indices $I_M$ be defined as in \eqref{IM}.
Denote $N_k=\{k-2j|\ j=0,\ldots,\lfloor k/2\rfloor\}$ and $\tilde J_k=N_k\cap I_M$. 
Then we have that
\begin{equation}\label{eq_Fischer+}
\mcP_{k}=\bigoplus_{\ell\in \tilde J_k} R^{k-\ell}\tilde\mcH_{\ell} \oplus
\bigoplus_{\ell\in J_k} R^{k-\ell}\mcH_{\ell} 
\end{equation}
where $J_k=N_k\setminus(\tilde J_k\cup J_k^0)$ with $J_k^0=\{2-M-\ell|\ \ell\in\tilde J_k\}$. 
\end{cor}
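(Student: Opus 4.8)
The plan is to unpack the iterated decomposition \eqref{eq_Fischer}, relabelled by $\ell=k-2j\in N_k$ (so that $R^{k-\ell}=R^{2j}$), into three groups of summands. Concretely, I will show that for each $\ell\in N_k$,
\[
(R^2\Delta R^2)^{(k-\ell)/2}\,\tilde\mcH_\ell=
\begin{cases}
R^{k-\ell}\,\tilde\mcH_\ell, & \ell\in\tilde J_k,\\
\{0\}, & \ell\in J_k^0,\\
R^{k-\ell}\,\mcH_\ell, & \ell\in J_k,
\end{cases}
\]
and that $N_k$ is the disjoint union $\tilde J_k\sqcup J_k^0\sqcup J_k$; substituting into \eqref{eq_Fischer} then gives \eqref{eq_Fischer+}. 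The disjointness is immediate from \eqref{IM}: $\tilde J_k\subseteq I_M\subseteq\{2-M/2,\dots,2-M\}$, whereas $J_k^0=\{2-M-\ell'\mid\ell'\in\tilde J_k\}\subseteq\{0,\dots,-M/2\}$, and these ranges do not meet since $-M/2<2-M/2$; moreover $J_k^0\subseteq N_k$ by a parity and range check (using that $\tilde J_k\neq\emptyset$ forces $k\geq2-M/2$), and $J_k$ is the remainder by definition.

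The computational input is the relation $\Delta R^2=R^2\Delta+4\mE+2M$, which by induction on $a$ yields, for every $P\in\mcP_\ell$,
\[
\Delta(R^{2a}P)=R^{2a}\Delta P+c_a\,R^{2a-2}P,\qquad c_a=2a(2\ell+2a+M-2),
\]
and hence $R^2\Delta R^2(R^{2a}P)=R^{2a+2}\bigl(c_{a+1}P+R^2\Delta P\bigr)$. If $\ell\notin I_M$ then $\tilde\mcH_\ell=\mcH_\ell$ by Theorem~\ref{tildeH}; since $\Delta$ annihilates $\mcH_\ell$, the formula collapses to $R^2\Delta R^2(R^{2a}H)=c_{a+1}R^{2a+2}H$, so iterating gives $(R^2\Delta R^2)^j\mcH_\ell=(c_1c_2\cdots c_j)R^{2j}\mcH_\ell$, which equals $R^{2j}\mcH_\ell$ when $c_1\cdots c_j\neq0$ and $\{0\}$ otherwise. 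If instead $P\in\tilde\mcH_\ell$, then $\Delta R^2\Delta P=0$ says precisely $R^2\Delta P\in\Ker_\ell\Delta=\mcH_\ell\subseteq\tilde\mcH_\ell$, so $N:=R^2\Delta$ maps $\tilde\mcH_\ell$ into $\mcH_\ell$ and satisfies $N^2=0$ on $\tilde\mcH_\ell$. Consequently $R^2\Delta R^2$ sends $R^{2a}\tilde\mcH_\ell$ into $R^{2a+2}\tilde\mcH_\ell$, and, because $R^2$ is injective on $\mcP$ (recall $m\neq0$) while $c_{a+1}\Id+N$ is a scalar plus a square-zero operator, this map is a bijection onto $R^{2a+2}\tilde\mcH_\ell$ exactly when $c_{a+1}\neq0$; composing such bijections for $a=0,\dots,j-1$ gives $(R^2\Delta R^2)^j\tilde\mcH_\ell=R^{2j}\tilde\mcH_\ell$ whenever $c_1,\dots,c_j$ are all nonzero.

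It remains to track the zeros of the $c_b$. The factor $c_b$ vanishes for some $b\geq1$ iff $b=b^\ast:=1-M/2-\ell$, and $b^\ast\geq1$ forces $\ell\leq-M/2$. For $\ell\in\tilde J_k\subseteq I_M$ we have $\ell\geq2-M/2$, hence $2\ell+2a+M\geq4>0$ and $c_{a+1}\neq0$ for all $a\geq0$; by the previous paragraph $(R^2\Delta R^2)^{(k-\ell)/2}\tilde\mcH_\ell=R^{k-\ell}\tilde\mcH_\ell$. For $\ell\in J_k\cup J_k^0$ we have $\ell\notin I_M$, so $\tilde\mcH_\ell=\mcH_\ell$, and with $j=(k-\ell)/2$ one must decide whether $b^\ast\in\{1,\dots,j\}$. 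A direct computation from \eqref{IM} shows that, for $\ell\in N_k$ with $0\leq\ell\leq-M/2$, one has $b^\ast\leq j$ iff $2-M-\ell\leq k$, i.e.\ iff $2-M-\ell\in N_k\cap I_M=\tilde J_k$, i.e.\ (since $\ell\mapsto 2-M-\ell$ is an involution) iff $\ell\in J_k^0$. Hence for $\ell\in J_k^0$ the product $c_1\cdots c_j$ contains the zero factor $c_{b^\ast}$ and the summand is $\{0\}$, while for $\ell\in J_k$ (where either $\ell>-M/2$, forcing $b^\ast\leq0$, or $\ell\leq-M/2$ with $\ell\notin J_k^0$, forcing $b^\ast>j$) no factor vanishes and the summand is $R^{k-\ell}\mcH_\ell$. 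This establishes the case distinction and hence the corollary.

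I expect the crux to be the structural observation that $R^2\Delta$ is square-zero and $\mcH_\ell$-valued on $\tilde\mcH_\ell$: this is what keeps $R^2\Delta R^2$ ``triangular'' with scalar diagonal $c_{a+1}$ along the tower $R^{2a}\tilde\mcH_\ell$, so that the exceptional blocks $R^{k-\ell}\tilde\mcH_\ell$ are transported faithfully rather than collapsing. The remaining ingredient, matching the vanishing index $b^\ast=1-M/2-\ell$ with membership in $J_k^0$ versus $J_k$, is elementary but must be carried out carefully against the definition \eqref{IM}.
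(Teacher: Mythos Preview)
Your argument is correct. It differs from the paper's proof in strategy: the paper argues by induction on $k$, using Theorem~\ref{Fischer} at each step to write $\mcP_k=\tilde\mcH_k\oplus R^2\Delta R^2\mcP_{k-2}$ and then invoking Lemma~\ref{isomorphism} to decide, summand by summand in the inductive decomposition of $\mcP_{k-2}$, whether $\Delta R^2$ is an isomorphism or zero. You instead start from the already iterated decomposition \eqref{eq_Fischer} and compute each block $(R^2\Delta R^2)^{(k-\ell)/2}\tilde\mcH_\ell$ in one shot, via the explicit commutator constants $c_b$ together with the observation that $N=R^2\Delta$ is square-zero on $\tilde\mcH_\ell$, so that $c_{a+1}\Id+N$ is invertible whenever $c_{a+1}\neq 0$. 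This is essentially the content of Lemma~\ref{isomorphism} unpacked and made explicit; what you gain is a closed-form description of every summand without an inductive bookkeeping argument, and a transparent explanation of why the ``partner'' indices $J_k^0$ drop out (namely, the single vanishing constant $c_{b^\ast}$ with $b^\ast=1-M/2-\ell$).

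One small point of logical hygiene: you invoke Theorem~\ref{tildeH}(i) to get $\tilde\mcH_\ell=\mcH_\ell$ for $\ell\notin I_M$, which in the paper is stated \emph{after} Corollary~\ref{cor_Fischer}. There is no circularity, since that equality follows directly from Theorem~B(i) together with Lemma~\ref{H0}(ii), neither of which depends on Corollary~\ref{cor_Fischer}; but it would be cleaner to cite those two results instead. Also note that your use of injectivity of $R^2$ tacitly relies on the standing hypothesis $m\neq 0$, which the paper has already imposed.
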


The particular case $M=-4$ is depicted in Figure \ref{Fig_III}. The exceptional indices are $I_{-4}=\{4,5,6\}$.
Notice that the first three rows look like the diagram for the purely fermionic Fischer decomposition  with $n=2$ (see Figure \ref{Fig_II}).
Other rows are infinite as in the classical case (see Figure \ref{Fig_I}). The $k$-th row starts with $\mcH_k$ except for the exceptional indices $k\in I_{-4}$ when it starts with ${\tilde{\mcH}_k}$. Moreover, the operators $\tilde\Delta=\Delta R^2\Delta$ and $\tilde R^2=R^2\Delta R^2$ act in a~given row like $\Delta$ and $R^2$  in the classical or purely fermionic case even if they do not generate $\msl(2)$.  
For example, we have $\tilde R^2 R^4\mcH_0=0$.

\begin{figure}[h]

\caption{The Fischer decomposition for $M=-4$ and $m\not=0$.}
\label{Fig_III}

$$\xymatrix@!C=5pt@R=1pt{
\mcP_0 & \mcP_1 & \mcP_2 & \mcP_3 & \mcP_4 & \mcP_5 & \mcP_6 & \mcP_7 & \mcP_8 & \cdots\\
\\
\mcH_0 & & \ar[ll]_{\tilde\Delta}  \ar[rr]^{\tilde R^2} R^2\mcH_0 & & R^4\mcH_0 & & 0 & &  \\
& \mcH_1 & & R^2\mcH_1 & & 0 & &  & \\
& & \mcH_2 & & 0 & &  & &  \\
& & & \mcH_3 & & R^2\mcH_3 & & R^4\mcH_3 & & \cdots\\
& & & & {\tilde{\mcH}_4} & & \ar[ll]_{\tilde\Delta}  \ar[rr]^{\tilde R^2} {R^2\tilde{\mcH}_4} & & {R^4\tilde{\mcH}_4} \\
& & & & & {\tilde{\mcH}_5} & & {R^2\tilde{\mcH}_5} & & \cdots\\
& & & & & & {\tilde{\mcH}_6} & & {R^2\tilde{\mcH}_6}\\
& & & & & & & \mcH_7 & & \cdots\\
& & & & & & & & \mcH_8 \\
& & & & & & & & & \ddots
}$$
\centerline{Here $\tilde\Delta=\Delta R^2\Delta$ and $\tilde R^2=R^2\Delta R^2$.}
\end{figure}

The following result deals with a~structure of $\osp(m|2n)$-module $\tilde\mcH_k$.

\begin{thm}\label{tildeH} Let the set $I_M$ of exceptional indices be defined as in \eqref{IM}. 
Denote $\mcH^0_k=R^2\Ker_{k-2} (\Delta R^2)$.

\smallskip\noindent
(i) Let $k\not\in I_M$. Then $\tilde\mcH_k=\mcH_k\simeq L^{m|2n}_{(k,0\ldots,0)}$ and $\mcH^0_k=0$. 

\smallskip\noindent
(ii) Let $k\in I_M$. Then all inclusions in $\mcH^0_k\subset\mcH_k\subset\tilde\mcH_k$ are proper and 
$$\mcH^0_k=R^{2k+M-2}\mcH_{2-M-k}.$$
Moreover, under the action of $\osp(m|2n)$, the module
$\tilde\mcH_k$ is indecomposable and its composition series is
$$\mcH^0_k\simeq L^{m|2n}_{(2-M-k,0\ldots,0)},\ \ \mcH_k/\mcH^0_k\simeq L^{m|2n}_{(k,0\ldots,0)}, \ \ \tilde\mcH_k/\mcH_k\simeq L^{m|2n}_{(2-M-k,0\ldots,0)}.$$ 

\end{thm}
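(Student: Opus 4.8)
The plan is to analyze the operator $\Delta R^2\Delta$ acting on $\mcP_k$ and compare it with $\Delta$, exploiting the $\msl(2)$ commutation relations $[\Delta/2,R^2/2]=\mE+M/2$. First I would establish the inclusions $\mcH^0_k\subset\mcH_k\subset\tilde\mcH_k$: the first is Theorem B(ii); the second is clear since $\Delta H=0$ forces $\Delta R^2\Delta H=0$. The content is that all these inclusions are proper exactly in the exceptional range. To understand $\tilde\mcH_k/\mcH_k$, note that $\tilde\mcH_k = \Ker_k(\Delta R^2\Delta)$ and $\Delta$ maps $\tilde\mcH_k$ into $\Ker_{k-2}(R^2\Delta) = \Ker_{k-2}(\Delta R^2)$ shifted — more precisely $\Delta(\tilde\mcH_k)\subseteq \Ker_{k-2}(R^2\Delta)$. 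I would identify $\Ker_{k-2}(R^2\Delta)$ using the $\msl(2)$-representation theory: on $\mcP_{k-2}$, $R^2$ acts injectively except precisely when the weight hits the exceptional values, i.e.\ when $R^2\mcH_{2-M-(k-2)}$-type degeneracies occur. This gives a short exact sequence $0\to\mcH_k\to\tilde\mcH_k\xrightarrow{\Delta}\Delta(\tilde\mcH_k)\to 0$ with $\Delta(\tilde\mcH_k)$ identified as an $\osp(m|2n)$-module.

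Next I would pin down $\Delta(\tilde\mcH_k)$ as $L^{m|2n}_{(2-M-k,0\ldots,0)}$. By the previous Fischer decomposition results and Theorem B, one expects $\Delta(\tilde\mcH_k)$ to sit inside $\mcP_{k-2}$ as a copy of $\mcH^0$-type harmonics; using Theorem \ref{Fischer} (which I may assume, as it is stated earlier) together with counting, $\tilde\mcH_k = \mcH_k \oplus (\text{complement})$ as vector spaces, and the complement maps isomorphically onto an irreducible of highest weight $(2-M-k,0,\ldots,0)$. The cleanest route is: use $\mcP_k = \tilde\mcH_k\oplus R^2\Delta R^2\mcP_{k-2}$ from Theorem \ref{Fischer} and Theorem B's description of $\mcH_k$ to compute, via the $\so(m)\times\spp(2n)$-decomposition of Proposition \ref{decomp0}, the character of $\tilde\mcH_k$; subtracting the characters of $\mcH^0_k\simeq L^{m|2n}_{(2-M-k,0\ldots,0)}$ and $\mcH_k/\mcH^0_k\simeq L^{m|2n}_{(k,0\ldots,0)}$ leaves exactly the character of one more copy of $L^{m|2n}_{(2-M-k,0\ldots,0)}$. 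This fixes the composition factors of $\tilde\mcH_k$.

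To prove indecomposability, I would argue that $\mcH_k$ is the unique maximal submodule, hence any direct-sum splitting $\tilde\mcH_k = \mcH_k\oplus N$ would force $N\simeq L^{m|2n}_{(2-M-k,0\ldots,0)}$ to be a submodule complementary to $\mcH_k$; then $N\cap\mcH_k=0$ while $N$ has the same highest weight as the submodule $\mcH^0_k\subset\mcH_k$. I would derive a contradiction by exhibiting a nonzero $\osp(m|2n)$-homomorphism or by showing that the highest weight vector of $N$ would have to lie in $\Ker\Delta=\bigoplus_j\mcH_j$, forcing $N\subset\mcH_k$. Concretely: a highest weight vector $v\in\tilde\mcH_k$ of weight $(2-M-k,0,\ldots,0)$ generating a submodule isomorphic to the irreducible must satisfy $\Delta v=0$ (since $\Delta v$ would be a highest weight vector of too-low weight in $\mcP_{k-2}$, impossible), so $v\in\mcH_k$, and then the submodule it generates lies in $\mcH_k$; but $\mcH_k/\mcH^0_k$ is irreducible of highest weight $(k,0\ldots,0)\neq(2-M-k,0\ldots,0)$ in the exceptional range (one checks $k\neq 2-M-k$ there, using $k\le 2-M$ with equality excluded appropriately, or handling the self-dual endpoint separately), so the only such submodule is $\mcH^0_k$ itself — contradicting that $N$ maps onto $\tilde\mcH_k/\mcH_k$.

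The main obstacle I anticipate is the last step: ruling out a splitting requires genuinely using the module structure, not just characters, because the composition factors alone (two copies of the same irreducible sandwiching another) are consistent with a decomposable module. The key leverage is the operator $\Delta$ itself, which is $\osp(m|2n)$-invariant and whose kernel on $\tilde\mcH_k$ is exactly $\mcH_k$; so $\Delta$ realizes $\tilde\mcH_k$ as a nonsplit extension provided $\Delta|_{\tilde\mcH_k}\neq 0$, i.e.\ provided the top and bottom copies of $L^{m|2n}_{(2-M-k,0\ldots,0)}$ are genuinely linked. Verifying $\Delta|_{\tilde\mcH_k}\neq 0$ — equivalently $\mcH_k\subsetneq\tilde\mcH_k$ — reduces to showing $\Ker_{k-2}(R^2\Delta)\neq 0$ in the exceptional range, which follows from Theorem B(ii) since $R^2$ kills $\mcH^0_k$-type elements: explicitly $\Delta R^2$ annihilates $R^{2k+M-4}\mcH_{4-M-k}$ when $k-2\in I_M$, and one must check $k\in I_M$ indeed gives a nonempty such situation. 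I would also need the edge cases where $\tilde J_k$ in Corollary \ref{cor_Fischer} behaves degenerately, which I would dispatch by direct inspection using the explicit range $2-M/2\le k\le 2-M$.
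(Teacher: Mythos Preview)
Your treatment of the composition factors is essentially right, and close to the paper's: they use the surjection $R^2\Delta:\tilde\mcH_k\to\mcH^0_k$ with kernel $\mcH_k$ (their Lemma~\ref{H0}) to get $\tilde\mcH_k/\mcH_k\simeq\mcH^0_k$ directly, avoiding a character computation. The genuine gap is in the indecomposability argument.

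First, you only rule out splittings of the special form $\tilde\mcH_k=\mcH_k\oplus N$. The preliminary assertion that $\mcH_k$ is the unique maximal submodule, which would reduce the general case to this one, is stated but not argued, and is essentially as hard as indecomposability itself.

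Second, and more seriously, the step ``$\Delta v$ would be a highest weight vector of too-low weight in $\mcP_{k-2}$, impossible'' fails. For $k\in I_M$ one has $R^{2k+M-4}\mcH_{2-M-k}\subset\mcP_{k-2}$, and this space carries a highest weight vector of weight exactly $(2-M-k,0,\ldots,0)$; indeed $\Delta(\tilde\mcH_k)$ lands precisely there. So nothing forbids $\Delta v\neq 0$. Relatedly, your last paragraph conflates ``$\Delta|_{\tilde\mcH_k}\neq 0$'' (which only says $\tilde\mcH_k\neq\mcH_k$) with ``the extension does not split''; these are different statements.

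The paper supplies the missing leverage via the Casimir. On $\mcP_k$ the operator $R^2\Delta$ equals the degree-$2$ Casimir $C$ plus the scalar $k(M-2+k)$ (equation~\eqref{Casimir}), so $R^2\Delta$ is an $\osp(m|2n)$-endomorphism and preserves every submodule. Given an arbitrary splitting $\tilde\mcH_k=U\oplus V$, the paper uses the multiplicity-free $\mfg_0=\so(m)\times\spp(2n)$-decomposition (Proposition~\ref{decomp0}) to see that the unique copy of $\mcH^b_k\otimes\mcH^f_0$ inside $\tilde\mcH_k$ lies in one summand, say $U$; since $\mcH_k=\mcU(\mfg)(\mcH^b_k)$ (from \cite{Cou}), this forces $\mcH_k\subset U$. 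Then $V\cap\mcH_k=0$ makes $R^2\Delta:V\to\mcH^0_k$ injective; but by the Casimir remark $R^2\Delta(V)\subset V$, while also $R^2\Delta(V)\subset\mcH^0_k\subset U$, so $R^2\Delta(V)=0$ and hence $V=0$. Your highest-weight-vector approach can be salvaged, but only after you invoke the Casimir identity so that $R^2\Delta$ (not $\Delta$ alone) preserves the hypothetical complement.
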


\begin{remark}
It is easy to see that $\mcP_{k}$ is an irreducible module under the action of $\gl(m|2n)$ given by  the operators $X_i\pa_{X_j}$ for $i,j=1,\ldots, m+2n$.
Then the decomposition \eqref{eq_Fischer+} can be viewed as branching of $\mcP_k$ from $\gl(m|2n)$ to $\osp(m|2n)$. According to Theorem \ref{tildeH}, \eqref{eq_Fischer+} is not in general an irreducible but just indecomposable decomposition of $\mcP_{k}$ under $\osp(m|2n)$.
\end{remark}

\section{Proofs of Theorems \ref{Fischer} and \ref{tildeH}}

To prove our main results we need some lemmas.
First we show the following algebraic lemma that we use later for $U = \mcP_{k-2}$, $V=\mcP_{k}$, $L = \Delta$ and $K= R^2$.

\begin{lemma}\label{LKLK}
Let $U,V$ be two vector spaces. Let $L:V \rightarrow U$ and  $K:U \rightarrow V$ be homomorphisms such that $U = \Ker LK \oplus \Img LK$. Then the following statements hold true.

\smallskip\noindent
(i) We have that $V = \Ker LKL \oplus \Img KLK$.

\smallskip\noindent
(ii) If, in addition, $\Ker LK=0$, then $\Ker LKL=\Ker L$, $\Img KLK=\Img K$ and $V = \Ker L \oplus \Img K$.
\end{lemma}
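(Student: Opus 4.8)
\textbf{Proof plan for Lemma \ref{LKLK}.}

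The plan is to prove (i) purely formally from the hypothesis $U = \Ker LK \oplus \Img LK$, and then derive (ii) as the special case where $\Ker LK = 0$.

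For part (i), first I would establish the two inclusions $\Ker LKL \cap \Img KLK = 0$ and $\Ker LKL + \Img KLK = V$ separately. For the intersection: take $v = KLKu \in \Img KLK$ with $LKLv = 0$. Then $LK(LKLv)$... more directly, $0 = LKLv = LKL(KLKu) = (LKLK)(LKu)$, so $LKu \in \Ker (LKLK)$ as an element of $U$; but $LKu \in \Img LK$, and on $\Img LK$ the map $LK$ restricts to an automorphism (this is exactly what $U = \Ker LK\oplus \Img LK$ gives, since $LK$ preserves $\Img LK$ and kills $\Ker LK$, so $LK|_{\Img LK}$ is injective hence onto $\Img LK$, hence bijective). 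Therefore $LKLK$ is also injective on $\Img LK$, forcing $LKu = 0$, whence $v = KLKu = K(LKu) = 0$. For the sum: given $v \in V$, write $LKv = a + b$ with $a \in \Ker LK$, $b \in \Img LK$; since $LK|_{\Img LK}$ is bijective there is $c \in \Img LK$ with $LKc = b$, and moreover $c = LKu$ for some $u$ (as $c\in\Img LK$). Set $w = KLKu = Kc$. I claim $v - w \in \Ker LKL$: indeed $LKL(v-w) = LKLv - LKLKc$; now $LKLv = L(a+b) = La + Lb$ since $LKv = a+b$ gives $L(Kv)$... here I must be a little careful — $LKLv = L(LKv) = L(a+b)$ only if $L$ and $LK$ interact correctly, but actually $LKLv$ means $L\circ K\circ L$ applied to $v$, not $L\circ(LK)$. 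So instead: $LKL(v - Kc) = LKLv - LKLKc$, and $LKLKc = (LKLK)c$; since $c \in \Img LK$ and on that subspace $LK$ acts as a bijection whose square also does, one checks $LKLKc = LK(LKLKc')$-type identities reduce this — the cleaner route is to note $(LK)(Lv - LKc) \in (LK)(U)$, and show $Lv - LKc \in \Ker LK$, then conclude. I would reorganize this as: decompose $Lv \in U$ via the hypothesis and peel off its $\Img LK$-part using surjectivity of $LK|_{\Img LK}$, producing $c$ with $Lv - LKc \in \Ker LK$; then $LKL(v) = LK(Lv) = LK(LKc) = LKLKc$, so $v - KLKc \in \Ker LKL$ and $KLKc \in \Img KLK$.

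For part (ii), if $\Ker LK = 0$ then the hypothesis becomes $U = \Img LK$, i.e. $LK$ is surjective; since $\Ker LK = 0$ it is bijective on $U$. Then $\Ker L = \Ker LKL$ because $LKLv = 0 \iff Lv = 0$ (apply the inverse of $LK$), and $\Img K = \Img KLK$ because $K(LK)$ has the same image as $K$ when $LK$ is onto $U$ (for any $u \in U$, $Ku = K(LK)(LK)^{-1}u \in \Img KLK$, and the reverse inclusion is trivial). Substituting into (i) gives $V = \Ker L \oplus \Img K$.

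The main obstacle is just keeping the bookkeeping straight in the surjectivity argument for (i): one has to repeatedly use that $LK$ restricted to $\Img LK$ is an isomorphism (with inverse defined only on $\Img LK$) and resist the temptation to treat $LKLv$ as $(LK)$ composed with $(LK)$. Once the observation ``$LK$ induces an automorphism of $\Img LK$, and so does $LKLK$'' is isolated and stated first, both the direct-sum-intersection and the spanning claim follow by short diagram chases with no real computation.
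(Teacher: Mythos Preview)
Your approach is correct and essentially identical to the paper's: for surjectivity in (i) the paper decomposes $Lv\in U=\Ker LK\oplus\Img LK$ twice, while you decompose once and invoke the equivalent fact that $LK|_{\Img LK}$ is an automorphism; the intersection arguments and part (ii) are the same in both. One slip in your final ``cleaner route'': from $LKLv = LKLKc$ you should conclude $v-Kc\in\Ker LKL$ (with $Kc=KLKu\in\Img KLK$ since $c\in\Img LK$), not $v-KLKc$ --- precisely the bookkeeping hazard you flagged, and which your earlier line $w=KLKu=Kc$ already handled correctly.
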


\begin{proof}
(i) If $v \in V$, then $Lv \in U$ can be decomposed as $k + LK j$, where $k \in \Ker LK$ and $j \in U$. Decomposing $j$ further as $k'+LKu$, $k'\in \Ker LK$, we get $Lv=k+(LK)^2u$. This means $$LKL(v - KLK u) = LKk=0$$ and hence we get the decomposition
$$v = (v - KLKu) + KLKu$$
with $v - KLKu \in \Ker LKL$ and $KLKu \in \Img KLK$. 

If $f \in \Ker LKL \cap \Img KLK$, then there is a $g$ such that $f = KLKg$ and $(LK)^3g=0$. Then $(LK)^2g=0$ since it is both in kernel and image of $LK$. Repeating once more we get $LKg=0$, hence $f = 0$.

\smallskip\noindent
(ii) This follows easily from (i) because $LK$ is an isomorphism of $U$.
\end{proof}

\begin{lemma}\label{commutator} For $j,k\in\mN_0$, we have
$$[\Delta, R^{2j+2}]=  C(j,k) R^{2j}\text{\ \ on\ \ }\mcP_k$$ where $C(j,k)=(2j+2)(2 k + M + 2j)$.

\smallskip\noindent
In particular, it holds that $\Delta R^2=C(j,k)$ on $R^{2j} \mcH_k$ and $C(j,k)=0$ if and only if $k+j=-M/2$.
\end{lemma}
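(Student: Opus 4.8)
The plan is to establish the commutator identity $[\Delta, R^{2j+2}] = C(j,k)R^{2j}$ on $\mcP_k$ by induction on $j$, using the base relation $[\Delta/2, R^2/2] = \mE + M/2$ recalled in Section 2, i.e. $[\Delta, R^2] = 4\mE + 2M$. On $\mcP_k$ this reads $[\Delta, R^2] = (4k+2M)\Id = C(0,k)\Id$, which is the case $j=0$. For the inductive step I would write $[\Delta, R^{2j+2}] = [\Delta, R^2 \cdot R^{2j}] = [\Delta, R^2]R^{2j} + R^2[\Delta, R^{2j}]$ (the Leibniz rule for the commutator, valid since all operators here are even), and then commute the operator $4\mE + 2M = [\Delta,R^2]$ past $R^{2j}$: since $R^{2j}$ raises homogeneity by $2j$, one has $\Delta R^{2j} = R^{2j}(\Delta + \text{lower})$... more directly, on $\mcP_k$ the factor $R^{2j}$ produces something in $\mcP_{k+2j}$, so $[\Delta,R^2]$ acts there as multiplication by $4(k+2j)+2M$. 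Combining with the inductive hypothesis $[\Delta,R^{2j}] = C(j-1,k)R^{2j-2}$ on $\mcP_k$ — being careful that $R^2 \cdot C(j-1,k)R^{2j-2} = C(j-1,k)R^{2j}$ — gives
$$[\Delta, R^{2j+2}] = \bigl(4(k+2j)+2M\bigr)R^{2j} + C(j-1,k)R^{2j}$$
on $\mcP_k$, and it remains to check the arithmetic identity $\bigl(4k+8j+2M\bigr) + C(j-1,k) = C(j,k) = (2j+2)(2k+M+2j)$. This is a routine polynomial verification: $C(j-1,k) = 2j(2k+M+2j-2) = 4jk + 2jM + 4j^2 - 4j$, and adding $4k+8j+2M$ yields $4jk+2jM+4j^2+4j + 4k+2M = (2j+2)(2k+M+2j)$ after factoring, so the induction closes.

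For the "in particular" clauses: on $R^{2j}\mcH_k$ one has $\Delta R^2 = \Delta R^{2j+2} \cdot$ (nothing — rather, start from $H \in \mcH_k$, so $\Delta H = 0$) — explicitly, for $H\in\mcH_k$, $\Delta R^2 (R^{2j}H) = \Delta R^{2j+2}H = [\Delta,R^{2j+2}]H + R^{2j+2}\Delta H = C(j,k)R^{2j}H + 0$, using $\Delta H = 0$; hence $\Delta R^2 = C(j,k)\Id$ on $R^{2j}\mcH_k$. Finally $C(j,k) = (2j+2)(2k+M+2j) = 0$ iff $2k+M+2j = 0$ (as $2j+2 \ne 0$) iff $k+j = -M/2$, which proves the last assertion.

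The only mildly delicate point, and the one I would write out most carefully, is the bookkeeping of which homogeneity space each operator acts on when pushing $[\Delta, R^2] = 4\mE+2M$ through the factor $R^{2j}$ in the inductive step: one must evaluate $\mE$ on $\mcP_{k+2j}$, not on $\mcP_k$, which is exactly the source of the $8j$ term in $C(j,k)-C(j-1,k)$. No step presents a genuine obstacle; this is a standard $\msl(2)$-type commutator computation, and everything reduces to the base identity from Section 2 plus the polynomial identity for $C(j,k)$.
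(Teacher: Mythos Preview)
Your proposal is correct and follows exactly the approach the paper indicates: induction on $j$ starting from the base relation $[\Delta,R^2]=4\mE+2M$, together with the observation that $R^{2j}$ shifts the homogeneity by $2j$ so that $4\mE+2M$ contributes $4(k+2j)+2M$. The paper's own proof is the one-line statement ``This follows from the commutation relation $[\Delta, R^2]= 4\mE + 2M$ by induction on $j$,'' so your write-up simply unpacks the same argument, including the arithmetic check $C(j,k)-C(j-1,k)=4k+2M+8j$ and the straightforward derivation of the two ``in particular'' clauses.
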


\begin{proof}
This follows from the commutation relation $[\Delta, R^2]= 4 \mE + 2 M$
by induction on $j$.
\end{proof}

\begin{lemma}\label{Fischer0} 
Let $M\in\mZ$ and $k\in\mN_0$. If $M\in-2\mN_0$, then assume that $k<2-M/2$.  Then we have that $\tilde\mcH_k=\mcH_k$ and
$$\mcP_k = \mcH_k \oplus R^2 \mcP_{k-2}.$$
\end{lemma}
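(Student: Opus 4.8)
\textbf{Proof plan for Lemma \ref{Fischer0}.}

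The plan is to prove this by induction on $k$, treating it as the base case analogue of the classical Fischer decomposition. First I would settle the claim $\tilde\mcH_k=\mcH_k$. Clearly $\mcH_k\subseteq\tilde\mcH_k$ since $\Delta P=0$ implies $\Delta R^2\Delta P=0$. For the reverse inclusion, suppose $P\in\mcP_k$ with $\Delta R^2\Delta P=0$; then $\Delta P\in\mcP_{k-2}$ lies in $\Ker(\Delta R^2)$. By Lemma \ref{commutator}, on $R^{2j}\mcH_{k'}$ the operator $\Delta R^2$ acts as the scalar $C(j,k')=(2j+2)(2k'+M+2j)$, which vanishes only when $j+k'=-M/2$. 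Under our hypothesis $k<2-M/2$, so $k-2<-M/2$, and for any $j\le\lfloor (k-2)/2\rfloor$ and $k'=k-2-2j\le k-2$ we get $j+k'=k-2-j<-M/2$ (using $M\le 0$ in the exceptional case, or $M$ arbitrary otherwise, one checks $C(j,k')\neq 0$); hence $\Delta R^2$ is injective on each such summand. To make this precise I would invoke the classical Fischer decomposition on $\mcP_{k-2}$ — but that is exactly what the induction will supply, so the argument must be organized carefully: assume the statement (both the decomposition and $\tilde\mcH=\mcH$) for all degrees below $k$, so that $\mcP_{k-2}=\bigoplus_{j\ge 0}R^{2j}\mcH_{k-2-2j}$ and $\Delta R^2$ is a scalar (nonzero) on each piece. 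Then $\Ker_{k-2}(\Delta R^2)=0$, forcing $\Delta P=0$, i.e. $P\in\mcH_k$.

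With $\tilde\mcH_k=\mcH_k$ in hand, the decomposition $\mcP_k=\mcH_k\oplus R^2\mcP_{k-2}$ follows from Lemma \ref{LKLK}(ii) applied with $U=\mcP_{k-2}$, $V=\mcP_k$, $L=\Delta$, $K=R^2$: the hypothesis of that lemma requires $\mcP_{k-2}=\Ker(\Delta R^2)\oplus\Img(\Delta R^2)$, which holds because (by the inductive decomposition of $\mcP_{k-2}$ and Lemma \ref{commutator}) $\Delta R^2$ acts as a direct sum of nonzero scalars, hence is an isomorphism of $\mcP_{k-2}$, so in fact $\Ker(\Delta R^2)=0$. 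Lemma \ref{LKLK}(ii) then yields $\Ker_k(\Delta R^2\Delta)=\Ker_k\Delta$ (re-deriving $\tilde\mcH_k=\mcH_k$) and $\mcP_k=\Ker_k\Delta\oplus\Img R^2=\mcH_k\oplus R^2\mcP_{k-2}$, as desired. The base case $k=0$ (and $k=1$) is trivial since $\mcP_{k-2}=0$ and $\mcH_k=\mcP_k$.

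The one point needing care — and the main obstacle — is verifying that $\Delta R^2$ is genuinely an isomorphism of $\mcP_{k-2}$ under the hypothesis $k<2-M/2$, i.e. that none of the scalars $C(j,k-2-2j)$ vanishes for $0\le j\le\lfloor(k-2)/2\rfloor$. The zero locus is $j+(k-2-2j)=-M/2$, that is $k-2-j=-M/2$, i.e. $j=k-2+M/2$. Since $j\ge 0$ this requires $k\ge 2-M/2$, which is excluded by hypothesis; hence all $C(j,\cdot)\neq 0$ and the claim holds. (When $M\notin-2\mN_0$ the condition $k+j=-M/2$ is never satisfiable for integers, so there is nothing to check and the restriction on $k$ is vacuous, recovering Theorem A's setting.) Everything else is a bookkeeping induction, so the lemma follows.
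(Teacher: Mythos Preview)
Your proof is correct and follows essentially the same route as the paper: induct on $k$, use the inductive decomposition $\mcP_{k-2}=\bigoplus_j R^{2j}\mcH_{k-2-2j}$ together with Lemma~\ref{commutator} to see that $\Delta R^2$ acts by the nonzero scalars $C(j,k-2-2j)$ on each summand (your check that $j=k-2+M/2\ge 0$ forces $k\ge 2-M/2$ is exactly the point), and then apply Lemma~\ref{LKLK}(ii) to conclude both $\tilde\mcH_k=\mcH_k$ and $\mcP_k=\mcH_k\oplus R^2\mcP_{k-2}$. The paper's version is terser and does not separate out the $\tilde\mcH_k=\mcH_k$ argument beforehand, but the logic is identical.
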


\begin{proof}
This is known, cf.\ Theorem A. But, for the sake of completeness, we give a~proof by induction on $k$.
For $k=0,1$, we have $\mcP_k = \mcH_k=\tilde\mcH_k$. Now assume that the statement is true for all $0\leq j< k$.
Then $$\mcP_{k-2}=\bigoplus_{j=0}^{\lfloor(k-2)/2\rfloor} R^{2j}\mcH_{k-2j-2}$$
and, by Lemma \ref{commutator}, the operator $\Delta R^2$ is an isomorphism of the space $\mcP_{k-2}$.
Indeed, all the constants $C(j,k-2j-2)$ are non-zero.
Finally, applying Lemma \ref{LKLK}, we complete the proof.
\end{proof}

\begin{lemma}\label{isomorphism} Let $k,j\in\mN_0$. Then the following statements hold true.

\smallskip\noindent
(i) If $\tilde\mcH_k\not=\mcH_k$, then 
$\Delta R^2$ is an isomorphism of the space $R^{2j}\tilde\mcH_k$ 
(that is, $\Delta R^2$ is an isomorphism of the space $R^{2j}\tilde\mcH_k$ onto itself).

\smallskip\noindent
(ii) In general, $\Delta R^2$ is either an isomorphism of the space $R^{2j}\tilde\mcH_k$ or $\Delta R^2=0$ on $R^{2j}\tilde\mcH_k$.
Moreover, for a~non-zero $\tilde\mcH_k$, the latter possibility occurs if and only if $\tilde\mcH_k=\mcH_k$ and $k+j=-M/2$.
\end{lemma}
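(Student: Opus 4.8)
The plan is to reduce the statement to elementary linear algebra on the finite-dimensional space $\tilde\mcH_k$. The key preliminary observation is that on $\tilde\mcH_k=\Ker_k(\Delta R^2\Delta)$ the operator $N:=R^2\Delta$ restricts to a square-zero endomorphism: for $H\in\tilde\mcH_k$ we have $(\Delta R^2)(\Delta H)=\Delta R^2\Delta H=0$, so $(\Delta R^2\Delta)(R^2\Delta H)=(\Delta R^2)\bigl((\Delta R^2)(\Delta H)\bigr)=0$, whence $NH=R^2\Delta H\in\tilde\mcH_k$, and $N^2H=R^2(\Delta R^2\Delta H)=0$. Moreover, since $m\neq0$, multiplication by $R^2$ is injective on $\mcP$, so $R^{2j}$ identifies $\tilde\mcH_k$ with $R^{2j}\tilde\mcH_k$; and by Lemma \ref{commutator}, for $H\in\tilde\mcH_k\subset\mcP_k$ we get $\Delta R^2(R^{2j}H)=\Delta R^{2j+2}H=R^{2j}\bigl(R^2\Delta H+C(j,k)H\bigr)$. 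Hence, under this identification, $\Delta R^2$ on $R^{2j}\tilde\mcH_k$ corresponds to the operator $T:=N+C(j,k)\,\Id$ on $\tilde\mcH_k$, and so $\Delta R^2$ is an isomorphism of $R^{2j}\tilde\mcH_k$ (resp.\ vanishes on it) exactly when $T$ is an isomorphism of $\tilde\mcH_k$ (resp.\ vanishes on it).

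Next I would analyze $T=C(j,k)\,\Id+N$ with $N^2=0$. If $C(j,k)\neq0$, then $T$ is invertible with inverse $C(j,k)^{-1}\Id-C(j,k)^{-2}N$; and if in addition $\tilde\mcH_k=\mcH_k$, then $\Delta$, hence $N$, vanishes on $\tilde\mcH_k$, so $T=C(j,k)\,\Id$. The ingredient that makes everything fit together is the implication $\tilde\mcH_k\neq\mcH_k\Rightarrow C(j,k)\neq0$, proved as follows. If $\tilde\mcH_k\neq\mcH_k$, choose $H\in\tilde\mcH_k$ with $\Delta H\neq0$; then $\Delta H\in\Ker_{k-2}(\Delta R^2)\setminus\{0\}$, so $\mcH^0_k=R^2\Ker_{k-2}(\Delta R^2)\neq0$ by injectivity of $R^2$, which by part (i) of Theorem B forces $k\in I_M$; then $k+j\geq k\geq 2-M/2>-M/2$, so $C(j,k)\neq0$ by Lemma \ref{commutator}.

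Putting these together proves both parts. For (i): if $\tilde\mcH_k\neq\mcH_k$ then $C(j,k)\neq0$, so $T$, and hence $\Delta R^2$ on $R^{2j}\tilde\mcH_k$, is an isomorphism. For (ii): if $C(j,k)\neq0$ then $T$ is an isomorphism; if $C(j,k)=0$ then, by the contrapositive of the implication just proved, $\tilde\mcH_k=\mcH_k$, so $N=0$ and $T=0$, i.e.\ $\Delta R^2=0$ on $R^{2j}\tilde\mcH_k$. Thus the two alternatives exhaust all cases, and for $\tilde\mcH_k\neq0$ they are mutually exclusive; finally the vanishing alternative occurs iff $C(j,k)=0$, i.e.\ iff $k+j=-M/2$ by Lemma \ref{commutator}, and we have seen that $C(j,k)=0$ entails $\tilde\mcH_k=\mcH_k$.

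The only step that is not purely formal is the implication $\tilde\mcH_k\neq\mcH_k\Rightarrow k\in I_M$, which I settle by quoting Theorem B; I do not expect a genuine obstacle, only the bookkeeping of the compositions $\Delta R^2\Delta$ and $N^2=R^2\,\Delta R^2\,\Delta$ restricted to $\tilde\mcH_k$ and the arithmetic of $C(j,k)$ from Lemma \ref{commutator}.
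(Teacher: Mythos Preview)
Your proof is correct and close in spirit to the paper's. Both rest on the identity $\Delta R^2(R^{2j}H)=R^{2j}\bigl(C(j,k)H+R^2\Delta H\bigr)$ coming from Lemma~\ref{commutator}. Your framing via the square-zero endomorphism $N=R^2\Delta|_{\tilde\mcH_k}$, with the explicit inverse $C(j,k)^{-1}\Id-C(j,k)^{-2}N$ when $C(j,k)\neq0$, is a clean alternative to the paper's case split on $H\in\mcH_k$ versus $H\in\tilde\mcH_k\setminus\mcH_k$ to establish injectivity. The one substantive difference is how you obtain $\tilde\mcH_k\neq\mcH_k\Rightarrow k\geq 2-M/2$ (hence $C(j,k)\neq0$): the paper simply takes the contrapositive of Lemma~\ref{Fischer0}, which was just proved and is entirely elementary, whereas you pass through $\mcH^0_k\neq0$ and invoke Theorem~B, an external result. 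Both routes are valid, but using Lemma~\ref{Fischer0} keeps the chain of lemmas self-contained and avoids citing a deeper structural theorem for what is really an elementary bound on $k$.
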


\begin{proof}
(i) Let $\tilde\mcH_k\not=\mcH_k$. By Lemma \ref{Fischer0}, we have that $M\in-2\mN_0$ and $k\geq 2-M/2$.
For a~given $H_k\in\tilde\mcH_k$, by Lemma \ref{commutator}, we get
\begin{equation}\label{eq_iso}
\Delta R^2 (R^{2j} H_k) = C(j,k) R^{2j} H_k + R^{2j} R^2 \Delta H_k
\end{equation}
where the constant $C(j,k)$ is non-zero. Since $R^2 \Delta H_k\in\mcH_k$ we have that $\Delta R^2 (R^{2j} H_k)\in R^{2j}\tilde\mcH_k$,
which gives $\Delta R^2 (R^{2j}\tilde\mcH_k)\subset R^{2j}\tilde\mcH_k$. The space $R^{2j}\tilde\mcH_k$ has a~finite dimension and so it is sufficient to show that the operator $\Delta R^2$ is injective on this space. To do this, let us notice that,  
if $H_k \in \mcH_k$, the right hand side of \eqref{eq_iso} is a nonzero multiple of $H_k$. If $H_k \in \tilde{\mcH}_k \setminus \mcH_k$, then the second term is in $R^{2j} \mcH_k$ and cannot cancel with the first term from $R^{2j}(\tilde\mcH_k\setminus\mcH_k)$, which finishes the proof of (i). 

\smallskip\noindent
(ii) This follows from (i) and Lemma \ref{commutator}.
\end{proof}

Now we are ready to prove Theorem \ref{Fischer}.

\begin{proof}[Proof of Theorem \ref{Fischer}]
We give a~proof by induction on $k$.
For $k=0,1$, we have $\mcP_k = \mcH_k=\tilde\mcH_k$. Now assume that the statement is true for all $0\leq j< k$.
Then 
\begin{equation}\label{eq_fischer}
\mcP_{k-2}=\bigoplus_{j=0}^{\lfloor(k-2)/2\rfloor} (R^2\Delta R^2)^j\tilde\mcH_{k-2j-2}.
\end{equation}
By Lemma \ref{isomorphism} (ii), we have that each direct summand $(R^2\Delta R^2)^j\tilde\mcH_{k-2j-2}$ is equal to either $R^{2j}\tilde\mcH_{k-2j-2}$ or $0$ and, consequently, it belongs to either $\Ker_{k-2}(\Delta R^2)$ or $\Img_{k-2}(\Delta R^2)$.
Then we have that $$\mcP_{k-2}=\Ker_{k-2}(\Delta R^2)\oplus\Img_{k-2}(\Delta R^2)$$
and we complete the proof of Theorem \ref{Fischer} by applying Lemma \ref{LKLK}. 
\end{proof}

\begin{proof}[Proof of Corollary \ref{cor_Fischer}]
We can prove this again by induction on $k$.
For $k=0,1$, we have $\mcP_k = \mcH_k$. Now assume that the statement is true for all $0\leq j< k$.
Then we have that
\begin{equation}\label{eq_Fischer++}
\mcP_{k-2}=\bigoplus_{\ell\in \tilde J_{k-2}} R^{k-2-\ell}\tilde\mcH_{\ell} \oplus
\bigoplus_{\ell\in J_{k-2}} R^{k-2-\ell}\mcH_{\ell}. 
\end{equation}
By Theorem \ref{Fischer}, we have
$\mcP_k = \tilde{\mcH}_k \oplus R^2 \Delta R^2 \mcP_{k-2}$. There are two possibilities.

\smallskip\noindent
(i) Let $k\not\in I_M$. Then $\tilde{\mcH}_k={\mcH}_k$ and, by Lemma \ref{isomorphism}, $\Delta R^2$ is an isomorphism of any direct summand of the decomposition \eqref{eq_Fischer++}. So $\Delta R^2 \mcP_{k-2}=\mcP_{k-2}$ and 
we get \eqref{eq_Fischer+} with $\tilde J_k=\tilde J_{k-2}$ and $J_k=J_{k-2}\cup \{k\}$.  In addition, we have ${\mcH}^0_k=0$.

\smallskip\noindent
(ii) Let $k\in I_M$. Then, by Lemma 4, we get from \eqref{eq_Fischer++} easily that $$\Ker_{k-2}(\Delta R^2)=R^{2k+M-4}\mcH_{2-M-k}.$$ 
Hence we obtain \eqref{eq_Fischer+} with $\tilde J_k=\tilde J_{k-2}\cup \{k\}$ and $J_k=J_{k-2}\setminus \{2-M-k\}$. 
In addition, we have that $\mcH^0_k=R^{2k+M-2}\mcH_{2-M-k}$.

\smallskip\noindent
The proof is now complete. Moreover, we have shown the statement of Lemma \ref{H0} (i) below as well.
\end{proof}

\begin{lemma}\label{H0}
(i) We have that $\mcH^0_k\not=0$ only if $k\in I_M$. Moreover, for $k\in I_M$, we get
$$\mcH^0_k=R^{2k+M-2}\mcH_{2-M-k}.$$

\smallskip\noindent
(ii) We have that $R^2\Delta (\tilde\mcH_k)=\mcH^0_k$.
In particular, $\tilde\mcH_k\not=\mcH_k$ if and only if $\mcH^0_k\not=0$.
\end{lemma}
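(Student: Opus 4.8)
\textbf{Plan for proving Lemma \ref{H0}.}
The goal has two parts, and I would handle them in the order (i) then (ii), since (ii) leans on (i). For part (i), note that $\mcH^0_k=R^2\Ker_{k-2}(\Delta R^2)$, so I first need to understand $\Ker_{k-2}(\Delta R^2)$. The natural tool is the Fischer decomposition of $\mcP_{k-2}$ from \eqref{eq_Fischer} together with Lemma \ref{isomorphism}(ii): each summand $(R^2\Delta R^2)^j\tilde\mcH_{k-2j-2}$ is either $R^{2j}\tilde\mcH_{k-2j-2}$ with $\Delta R^2$ an isomorphism on it, or else it is $0$; and the summand on which $\Delta R^2$ can fail to be injective is exactly $R^{2j}\mcH_\ell$ with $\ell+j=-M/2$, where $\ell=k-2j-2$. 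Solving $\ell+j=-M/2$ with $\ell=k-2-2j$ gives $j=k-2+M/2$ and $\ell=2-M-k$; on that summand $\Delta R^2=0$ identically by Lemma \ref{commutator}. So $\Ker_{k-2}(\Delta R^2)$ is nonzero precisely when such a $j\ge 0$ exists and $\mcH_{2-M-k}\neq 0$, i.e.\ when $k-2+M/2\ge 0$ and $2-M-k\ge 0$, which is exactly $k\in I_M$; and in that case $\Ker_{k-2}(\Delta R^2)=R^{2(k-2+M/2)}\mcH_{2-M-k}=R^{2k+M-4}\mcH_{2-M-k}$. Multiplying by $R^2$ gives $\mcH^0_k=R^{2k+M-2}\mcH_{2-M-k}$ for $k\in I_M$ and $\mcH^0_k=0$ otherwise. (As the proof of Corollary \ref{cor_Fischer} already remarks, this is exactly the computation carried out there, so I may simply cite it.) One should double-check $R^{2k+M-2}\mcH_{2-M-k}\neq 0$; this follows since $R^2$ is injective on $\mcP_j$ in the relevant degree range — indeed $\Delta R^2$ restricted to $\mcH_\ell$ (for $\ell\ne -M/2$) is a nonzero scalar, and more generally the nonvanishing is ensured because in the Fischer decomposition \eqref{eq_Fischer} of $\mcP_{k-2}$ the summand $R^{2j}\mcH_\ell$ is genuinely present and nonzero whenever $\mcH_\ell\neq0$.

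For part (ii), I want to show $R^2\Delta(\tilde\mcH_k)=\mcH^0_k$. Take $H\in\tilde\mcH_k=\Ker_k(\Delta R^2\Delta)$; then $\Delta R^2(\Delta H)=0$, so $\Delta H\in\Ker_{k-2}(\Delta R^2)$, whence $R^2\Delta H\in R^2\Ker_{k-2}(\Delta R^2)=\mcH^0_k$. This proves $R^2\Delta(\tilde\mcH_k)\subseteq\mcH^0_k$. For the reverse inclusion, take any element of $\mcH^0_k$, say $R^2 u$ with $u\in\Ker_{k-2}(\Delta R^2)$. By part (i) this means (in the exceptional case) $u\in R^{2k+M-4}\mcH_{2-M-k}$; write $u=R^{2j}h$ with $j=k-2+M/2$, $h\in\mcH_{2-M-k}$. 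I would then produce an explicit preimage in $\tilde\mcH_k$: the natural candidate is $R^{2(j+1)}h=R^{2k+M-2}h\in\mcH^0_k\subseteq\mcH_k\subseteq\tilde\mcH_k$, but applying $R^2\Delta$ to it gives $0$ since it lies in $\mcH_k=\Ker\Delta$, which is the wrong value. So instead I look for $H$ with $\Delta H=u=R^{2j}h$ up to a scalar: take $H=R^{2(j+1)}h$ is wrong; rather take a preimage of $R^{2j}h$ under $\Delta$. Since $\mcH_k^0\neq 0$ forces $\tilde\mcH_k\neq\mcH_k$ (which is what we are trying to prove), I would argue more structurally: by Theorem \ref{Fischer}, $\mcP_{k-2}=\Ker_{k-2}(\Delta R^2)\oplus\Img_{k-2}(\Delta R^2)$, and $R^2$ maps the first summand isomorphically onto $\mcH^0_k$; meanwhile $\Delta$ maps $\mcP_k$ onto $\mcP_{k-2}$ (surjectivity of $\Delta$ in each degree, which follows from the Fischer decomposition for $\mcP_{k-2}$ since each $R^{2j}\mcH_\ell$ summand with $\ell\ne-M/2$ is hit and the $\ell=-M/2$ summand is hit via $R^{2(j-1)}\mcH_{-M/2}$ mapping onto it). Given surjectivity of $\Delta:\mcP_k\to\mcP_{k-2}$, for any $u\in\Ker_{k-2}(\Delta R^2)$ pick $H\in\mcP_k$ with $\Delta H=u$; then $\Delta R^2\Delta H=\Delta R^2 u=0$, so $H\in\tilde\mcH_k$, and $R^2\Delta H=R^2 u$. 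Hence $R^2\Delta(\tilde\mcH_k)\supseteq\mcH^0_k$, giving equality. Finally, $\tilde\mcH_k\neq\mcH_k$ iff there exists $H\in\tilde\mcH_k$ with $\Delta H\neq 0$, i.e.\ $R^2\Delta H\neq 0$ (using $R^2$ injective on $\mcP_{k-2}$ when $k-2$ is in the relevant range, or just on $\Img\Delta|_{\tilde\mcH_k}$), i.e.\ $R^2\Delta(\tilde\mcH_k)=\mcH^0_k\neq 0$.

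The step I expect to be the main obstacle is pinning down the surjectivity of $\Delta:\mcP_k\to\mcP_{k-2}$ and the injectivity of $R^2$ in the exceptional range cleanly, because precisely in the case $M\in-2\mN_0$ the operators $\Delta$ and $R^2$ have nontrivial kernels/cokernels on various $R^{2j}\mcH_\ell$ strips (the constant $C(j,\ell)$ vanishes for $\ell+j=-M/2$), so one cannot just quote the $\msl(2)$-representation theory as in the classical case. The safe route is to work strip by strip in the Fischer decomposition \eqref{eq_Fischer} of $\mcP_{k-2}$, using Lemma \ref{commutator} to see exactly which $\Delta R^2$ and which $R^2\Delta$ are isomorphisms and which vanish, and to check that the one potentially missing strip ($\ell=-M/2$) is nonetheless reached — for surjectivity of $\Delta$ — from the strip above it. Alternatively, and perhaps most economically, part (ii) can be deduced directly from the already-established Corollary \ref{cor_Fischer}: the proof of that corollary explicitly identifies $\Ker_{k-2}(\Delta R^2)$ and shows $\Img_{k-2}(\Delta R^2)$ is all the other strips, and $\tilde\mcH_k$ is by Theorem \ref{Fischer} a complement to $R^2\Delta R^2\mcP_{k-2}=R^2\Img_{k-2}(\Delta R^2)$; chasing $R^2\Delta$ through these identifications yields $R^2\Delta(\tilde\mcH_k)=R^2\Ker_{k-2}(\Delta R^2)=\mcH^0_k$ with no extra work.
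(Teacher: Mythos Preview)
Your proposal is correct and follows essentially the same approach as the paper: for (i) you reduce to the computation already done in the proof of Corollary~\ref{cor_Fischer} (the paper likewise just refers back to that proof and to Theorem~B), and for (ii) you prove the easy inclusion $R^2\Delta(\tilde\mcH_k)\subset\mcH^0_k$ directly and obtain the reverse inclusion by lifting any $u\in\Ker_{k-2}(\Delta R^2)$ through the surjection $\Delta:\mcP_k\to\mcP_{k-2}$, exactly as the paper does. The only minor difference is that the paper simply cites \cite[Theorem~2]{DS3} for the surjectivity of $\Delta$, whereas you propose to verify it strip by strip via the Fischer decomposition; either route works, and your worry about the exceptional strip is unnecessary here since the cited surjectivity holds in all cases.
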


\begin{proof} (i) This is known, see Theorem B. See also the proof of Corollary \ref{cor_Fischer} above.

\smallskip\noindent
(ii) Obviously, $R^2\Delta (\tilde\mcH_k)\subset\mcH^0_k$ because $\Delta (\tilde\mcH_k)\subset \Ker_{k-2}(\Delta R^2)$.
Assume now that $R^2 H_{k-2}\in\mcH^0_k$. Since $\Delta:\mcP_k\to\mcP_{k-2}$ is onto (see \cite[Theorem 2]{DS3}) there is $H_k\in\mcP_k$ such that $\Delta H_k=H_{k-2}$.
Then $H_k\in\tilde\mcH_k$ and $R^2\Delta H_k=R^2H_{k-2}$.
\end{proof}

Now we are going to prove Theorem \ref{tildeH}.

\begin{proof}[Proof of Theorem \ref{tildeH}]
Obviously, we have $\tilde\mcH_k/\mcH_k\simeq \mcH^0_k$ because the invariant operator $R^2\Delta:\tilde\mcH_k\to\mcH^0_k $ is onto and has the kernel $\mcH_k$ (see Lemma \ref{H0}). 
Then, according to Theorem B, it remains to prove that $\tilde\mcH_k$ is an $\osp(m|2n)$-indecomposable module.

Assume that $M\in-2\mN_0$, $m\not=0$ (that is, $m\geq 2$) and $k\in I_M$. We show that $\tilde\mcH_k$ is an $\osp(m|2n)$-indecomposable module.
To do this, let $\tilde\mcH_k=U\oplus V$ for some $\osp(m|2n)$-modules $U,V$. Then we prove that one of the modules $U,V$ is trivial.

Indeed, under the action of $\mfg_0=\so(m)\times\spp(2n)$, the even part of $\mfg=\osp(m|2n)$,
the finite-dimensional module $\tilde\mcH_k$ is completely reducible and $\tilde\mcH_k=\mcH_k\oplus W$ for some $\mfg_0$-module $W$. Since $R^2\Delta:W\to\mcH^0_k $ is a~$\mfg_0$-invariant isomorphism we have that $W\simeq\mcH^0_k\simeq\mcH_{2-M-k} $ 
and, consequently, $\tilde\mcH_k\simeq\mcH_k\oplus\mcH_{2-M-k} $ as $\mfg_0$-modules.
By Proposition~\ref{decomp0}, we know explicitly $\mfg_0$-irreducible decomposition of  $\mcH_k$ and $\mcH_{2-M-k}$.
In particular, $\mcH^b_{k}$ can be viewed as a~subset of $\tilde\mcH_k$ such that it is the unique $\mfg_0$-irreducible submodule of $\tilde\mcH_k$ which is equivalent to $\mcH^b_{k}\otimes \mcH^f_0$.
Therefore, $\mcH^b_{k}$ is contained in one of the modules $U$ or $V$, let us say, $U$. By \cite[Proof of Theorem 5.1]{Cou}, we know that $\mcH_{k}=\mcU(\mfg)(\mcH^b_{k})$ and hence $\mcH_{k}\subset U$. Here $\mcU(\mfg)$ is the universal enveloping superalgebra of $\mfg$.

Now we are ready to prove that $V=0$. Indeed, since $V\cap\mcH_k=0$ the map $R^2\Delta:V\to\mcH^0_k$ is injective. On the other hand,
by \eqref{Casimir},
$R^2\Delta$ on $\mcP_k$ is equal to the Casimir operator of degree 2 for $\mfg$ up to a~constant, which gives $R^2\Delta(V)\subset V\cap\mcH^0_k=0$ and, finally, $V=0$. This completes the proof.
\end{proof}

\subsection*{Acknowledgments}

The authors are very grateful for useful advise and suggestions from Vladim\'ir Sou\v cek.

\end{document}